\renewcommand{\epsilon}{\varepsilon}
\renewcommand{\H}{{\mathcal H}}
\newcommand{\pnorm}[2][]{\if #1'' \left|#2\right|_p \else \left|#2\right|_{#1} \fi}
\begin{document}

\title[Fractional $p$-Laplacian problems with singular weights]{A note on fractional $p$-Laplacian \\ problems with singular weights}

\author[K.\ Ho, K.\ Perera, I.\ Sim and M.\ Squassina]{Ky Ho, Kanishka Perera, Inbo Sim, and Marco Squassina}

\address[K.\ Ho]{NTIS \newline
	University of West Bohemia, Technick\'a 8, 306 14 Plze\v{n}, Czech Republic}
\email{ngockyh@ntis.zcu.cz}

\address[K.\ Perera]{
Department of Mathematical Sciences \newline
Florida Institute of Technology, Melbourne, FL 32901, USA}
\email{kperera@ﬁt.edu}

\address[I.\ Sim]{
Department of Mathematics \newline
University of Ulsan, Ulsan 680-749, Republic of Korea}
\email{ibsim@ulsan.ac.kr}

\address[M.\ Squassina]{Dipartimento di Informatica \newline
University of Verona, Strada Le Grazie I-37134 Verona, Italy}
\email{marco.squassina@univr.it}

\subjclass[2000]{35P15, 35P30, 35R11}
\keywords{Fractional $p$-Laplacian, critical groups, existence, multiplicity}
\thanks{K.\ Ho was supported by the project LO1506 of the Czech Ministry of Education, Youth and Sports.
	I.\ Sim was supported by NRF Grant No.\ 2015R1D1A3A01019789.\
	M.\ Squassina is member of Gruppo Nazionale per l'Analisi Matematica, la Probabilit\`a
	e le loro Applicazioni (GNAMPA)}

\begin{abstract}
We study a class of fractional $p$-Laplacian problems with weights which are possibly singular on the boundary of the domain.
We provide existence and multiplicity results as well as characterizations of critical groups and related applications.
\end{abstract}

\dedicatory{Dedicated with admiration to Paul Rabinowitz, a master in Nonlinear Analysis}

\maketitle

%

\numberwithin{equation}{section}
\newtheorem{theorem}{Theorem}[section]
\newtheorem{lemma}[theorem]{Lemma}
\newtheorem{proposition}[theorem]{Proposition}
\newtheorem{corollary}[theorem]{Corollary}
\newtheorem{definition}[theorem]{Definition}
\newtheorem{example}[theorem]{Example}
\newtheorem{remark}[theorem]{Remark}
\allowdisplaybreaks

\newcommand{\A}{{\mathscr A}}
\newcommand{\B}{{\mathscr B}}
	
\section{Introduction}
Let $s \in (0,1)$ and $\Omega\subset\mathbb{R}^{N}$ be a bounded domain with Lipschitz boundary $\partial \Omega$. Consider also
a Carath\'{e}odory function $f:\Omega\times\mathbb{R}\to\mathbb{R}$. Recently, the following semi-linear problem involving
the fractional Laplacian has been the subject of various investigations
\begin{eqnarray*}
\begin{cases}
 (-\Delta)^su=f(x,u) \quad &\text{in } \Omega,\\
  u=0\quad &\text{in } \mathbb{R}^N\setminus \Omega.
  \end{cases}
\end{eqnarray*}
The nonlocal operator $(-\Delta)^s$ naturally arises in various fields, such as continuum mechanics, phase transition phenomena,
population dynamics, game theory and financial mathematics  \cite{A,C}.
Existence \cite{SV, SV3}, non-existence \cite{RS1} and regularity \cite{CS1, RS} have been studied intensively.
In this paper, we consider quasi-linear problems
\begin{eqnarray}\label{1.2}
\begin{cases}
(-\Delta)_p^su=f(x,u) \quad &\text{in } \Omega,\\
u=0\quad &\text{in } \mathbb{R}^N\setminus \Omega,
\end{cases}
\end{eqnarray}
containing a nonlocal nonlinear operator known as the fractional $p$-Laplacian,
which represents a natural extension of fractional Laplacian. For $p \in (1,\infty)$ and $u$ smooth enough,
\begin{equation*}
(- \Delta)_p^s\, u(x) = 2\, \lim_{\varepsilon \searrow 0} \int_{\mathbb{R}^N \setminus B_\varepsilon(x)} \frac{|u(x) - u(y)|^{p-2}\, (u(x) - u(y))}{|x - y|^{N+sp}}\, dy, \quad x \in \mathbb{R}^N.
\end{equation*}
We refer to \cite{C} for the motivations that lead to
the introduction of this operator.\ So far, existence and regularity for the above problem have been investigated \cite{BP,erk-lind,kasm,IMS1, IS, MRmpsy} under the assumption that the function $f$ is $L^\infty$-Carath\'{e}odory, namely {\em nonsingular} in the $x$-dependence on $\partial\Omega$. Our goal in this paper is to get existence results for \eqref{1.2} when $f$ involves {\em singular weights}. It is worth noticing that this is new even for the plain fractional Laplacian.
This paper is motivated by \cite{PS} where the $p$-Laplace equation $- \Delta_p u= f(x,u)$ was investigated with singular weights, namely
$\Delta_p u = \mbox{div} (|\nabla u|^{p-2} \nabla u)$ and $f$ satisfies the subcritical growth condition
\begin{equation} \label{1.5}
|f(x,t)| \le h_1(x)\, |t|^{q_1 - 1}+\cdots +h_n(x)\, |t|^{q_n - 1}\quad \text{for a.a.\ $x \in \Omega$ and all $t \in \mathbb{R}$,}
\end{equation}
for some $q_i \in [1,p^\ast)$ with $p^\ast := Np/(N-p)$ and measurable weights $h_i \ge 0$ which are 
possibly singular along the boundary $\partial \Omega$. Admissible classes of weights are introduced in \cite{PS}, which 
are appropriate in order to use H\"older and Hardy inequalities to show that the
functional associated with \eqref{1.2} is well-defined and the Palais-Smale condition
holds at any energy level, allowing to obtain several existence results. Previously, the semi-linear
case was considered e.g.\ in \cite{Ry,otani1,otani2,usami}.

By introducing a suitable class of weights we will get results about existence, multiplicity and characterization of critical groups for any $s\in (0,1).$
More precisely, in Theorem~\ref{mult}, we get a multiplicity result for $f(x,u)=h(x)|u|^{q-2}u$ with $q\neq p$.
Theorems~\ref{Theorem 4.1}, \ref{Theorem 4.2} and \ref{Theorem 4.6} are
about the computation of critical groups at zero of the functional associated with \eqref{1.2} when $f(x,u)$ is a sum of terms
with singular weight enjoying proper summability.
Moreover, in Proposition~\ref{bound} we prove the boundedness of solutions for
$f(x,u) = \lambda h(x)|u|^{p-2}u$ with $h$ belonging to a suitable class.
In Theorem~\ref{nontrivialth}, nontrivial solutions are found
under various conditions on $f$.
Finally, in Theorem~\ref{multip}, we establish the existence of infinitely many solutions when $f(x,u)$ is odd in $u$.

\vskip4pt
The paper is organized as follows. In Section 2, we provide a suitable functional framework 
for problem \eqref{1.2} and prove some preliminary results. 
In Section 3, we consider related eigenvalue problems. 
In Section 4, we compute the critical groups of the functional associated with \eqref{1.2}.
Section 5 is devoted to show existence of nontrivial solutions via cohomological local splitting 
and critical groups. Finally, in Section 6, we obtain the existence of infinitely many solutions.


\section{Functional framework and preliminaries}

\noindent
Throughout the paper we will assume that 
$\Omega\subset {\mathbb R}^N$ is a bounded Lipschitz domain with $N\geq 2$.
\vskip1pt
\noindent
In this section, we provide the variational setting on a suitable function space for \eqref{1.2}, jointly with some preliminary results.
We consider, for any $p\in (1,\infty)$ and $s\in (0,1)$, the space
$$
W^{s,p}_{0}(\Omega):=\big\{u\in W^{s,p}(\mathbb{R}^N):\, u=0 \,\ \text{in}\ \mathbb{R}^N\setminus \Omega \big\},
$$
endowed with the standard Gagliardo norm
\begin{equation}
\label{norm}
\|u\|:=\left(\int_{{\mathbb R}^{2N}}\frac{|u(x)-u(y)|^p}{|x-y|^{N+sp}}dxdy\right)^{1/p}.
\end{equation}
We observe that, as it can be readily seen, this norm is equivalent to the full norm
\begin{equation*}
u\mapsto  \left(\int_{{\mathbb R}^{N}}|u(x)|^pdx+\int_{{\mathbb R}^{2N}}\frac{|u(x)-u(y)|^p}{|x-y|^{N+sp}}dxdy\right)^{1/p}\!\!,
\end{equation*}
namely a Poincar\'e inequality holds in $W^{s,p}_{0}(\Omega)$. Let
$$
p_s^\ast :=\frac{Np}{N-sp},
$$
with the agreement that $p_s^\ast =\infty$ if $N\leq sp$.
It is well-known that $W^{s,p}_{0}(\Omega)$ is a uniformly convex reflexive
Banach space, continuously embedded into $L^q(\Omega)$ for all $q\in [1,p^\ast_s]$
if $N>sp$, for all $1\leq q<\infty$ if $N=sp$ and into $L^\infty(\Omega)$ for $N<sp$. It is also compactly injected in $L^q(\Omega)$ for any $q\in [1,p^\ast_s)$ if $N\geq sp$ and into $L^\infty(\Omega)$ for $N<sp$.
Furthermore, $C_0^{\infty }(\Omega)$ is a dense subspace of $W^{s,p}_{0}(\Omega)$ with respect to the norm \eqref{norm}.\
In particular, restrictions to $\Omega$ of functions in $W^{s,p}_{0}(\Omega)$
belong to the closure of $C_0^{\infty }(\Omega)$ in $W^{s,p}(\Omega)$, i.e.\ with respect to the localized norm
$$
\|u\|_{W^{s,p}_{0}(\Omega)}:=\left(\int_{\Omega}|u(x)|^pdx+\int_{\Omega}\int_{\Omega}\frac{|u(x)-u(y)|^p}{|x-y|^{N+sp}}dxdy\right)^{1/p}.
$$
This closure is often denoted with the same symbol $W^{s,p}_{0}(\Omega)$.\ Notice that
for the seminorm localized on $\Omega\times\Omega$ there is no Poincar\'e inequality with $\int_{\Omega}|u|^pdx$ if $sp\leq 1$, cf.\
\cite[Remark 2.4]{BP}.
\vskip4pt
\noindent
Next we state some Hardy-type inequalities from \cite[Theorem 1.1 and formula (17)]{Dyda}, for the
cases $sp\neq 1$ and \cite[Theorem 6.5]{hardyy}, for the particular case $sp=1$.

\begin{theorem}[Hardy inequality]
	\label{Hardy}
For any $p\in (1,\infty)$ and $s\in (0,1)$ the following facts hold:
\begin{itemize}
\item[$\bullet$] if $sp>1$, then for any $u\in W^{s,p}_{0}(\Omega)$ we have
$$
\int_{\Omega}\frac{|u(x)|^p}{\operatorname{dist}(x,\partial\Omega)^{sp}}dx
\leq C\int_{\Omega}\int_{\Omega}\frac{|u(x)-u(y)|^p}{|x-y|^{N+sp}}dxdy,
$$
where $C$ is a positive constant depending only on $\Omega,N,p$ and $s$. 
\vskip6pt
\item[$\bullet$]  if $sp<1$, then for any $u\in W^{s,p}_{0}(\Omega)$ we have
$$
\int_{\Omega}\frac{|u(x)|^p}{\operatorname{dist}(x,\partial\Omega)^{sp}}dx
\leq C'\left(\int_{\Omega}|u|^p\, dx+\int_{\Omega}\int_{\Omega}\frac{|u(x)-u(y)|^p}{|x-y|^{N+sp}}dxdy\right)\!,
$$
where $C'$ is a positive constant depending only on $\Omega,N,p$ and $s$.
\vskip6pt 
\item[$\bullet$]  if $sp=1$, then for any $u\in W^{s,p}_{0}(\Omega)$ we have
$$
\int_{\Omega}\frac{|u(x)|^p}{\operatorname{dist}(x,\partial\Omega)^{sp}}dx
\leq C''\int_{{\mathbb R}^{2N}}\frac{|u(x)-u(y)|^p}{|x-y|^{N+sp}}dxdy,
$$
where $C''$ is a positive constant depending only on $\Omega,N,p$ and $s$. 
\end{itemize}
\end{theorem}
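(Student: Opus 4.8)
The plan is to reduce the three estimates to a model inequality on the half-space, which in turn comes from a one-dimensional fractional Hardy inequality, and then to transfer it to $\Omega$ by a localization argument exploiting the Lipschitz regularity of $\partial\Omega$; by the density of $C_0^\infty(\Omega)$ recalled above it suffices to argue for $u\in C_0^\infty(\Omega)$. The base case is the one-variable inequality $\int_0^\infty |g(t)|^p t^{-sp}\,dt\le C\iint_{(0,\infty)^2}|g(t)-g(\tau)|^p|t-\tau|^{-1-sp}\,dt\,d\tau$ for $g\in C_0^\infty((0,\infty))$, which I would prove by the classical dyadic decomposition of $(0,\infty)$ into the rings $[2^k,2^{k+1})$: on each ring $\int|g|^p$ is estimated by a scaled fractional Poincar\'e inequality plus the $p$-th power of the dyadic mean $a_k$, $|a_k-a_{k+1}|^p$ is controlled by the interaction of the two adjacent rings, and the telescoping sum reduces to a weighted discrete Hardy inequality with ratio $2^{1-sp}$, which converges \emph{precisely} when $sp\neq1$. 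This is exactly where the hypothesis enters and already explains why $sp=1$ must be treated separately. Passing to the half-space $H:=\{x\in\mathbb R^N:\ x_N>0\}$ is then a standard averaging argument (in the spirit of \cite{Dyda}): integrating the kernel in the tangential variable gives $\int_{\mathbb R^{N-1}}(|x'-y'|^2+|x_N-y_N|^2)^{-(N+sp)/2}\,dy'=c\,|x_N-y_N|^{-1-sp}$, and a convexity estimate replaces the numerator $|u(x)-u(y)|^p$ by the normal one-dimensional difference up to a remainder reabsorbed into the double integral, yielding, for $u\in C_0^\infty(H)$ and $sp\neq1$,
\[
\int_H\frac{|u(x)|^p}{x_N^{sp}}\,dx\le C\iint_{H\times H}\frac{|u(x)-u(y)|^p}{|x-y|^{N+sp}}\,dx\,dy .
\]

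For the cases $sp\neq1$ I would localize. Cover $\overline\Omega$ by an open set compactly contained in $\Omega$ together with finitely many boundary balls $U_1,\dots,U_m$, each carrying a bi-Lipschitz map $\Phi_j$ flattening $\partial\Omega\cap U_j$ onto a piece of $\partial H$; since $\Omega$ is Lipschitz, $\operatorname{dist}(x,\partial\Omega)\simeq(\Phi_j(x))_N$ near $U_j$, with constants depending only on the Lipschitz character. Choose cut-offs $\theta_j\in C_0^\infty(U_j)$ equal to $1$ on a smaller cover. Away from $\partial\Omega$ the weight is bounded, so that part of the integral is bounded by $C\|u\|_{L^p(\Omega)}^p$; on each $U_j$ one applies the half-space estimate to $(\theta_ju)\circ\Phi_j^{-1}$, using that a bi-Lipschitz change of variables distorts the Gagliardo seminorm by bounded factors, and then invokes the product estimate
\[
\iint_{\mathbb R^{2N}}\frac{|(\theta_ju)(x)-(\theta_ju)(y)|^p}{|x-y|^{N+sp}}\,dx\,dy\le C\Big(\|u\|_{L^p(\Omega)}^p+\iint_{\Omega\times\Omega}\frac{|u(x)-u(y)|^p}{|x-y|^{N+sp}}\,dx\,dy\Big),
\]
which follows by writing $(\theta_ju)(x)-(\theta_ju)(y)=\theta_j(x)(u(x)-u(y))+u(y)(\theta_j(x)-\theta_j(y))$ and estimating the second term via the Lipschitz bound on $\theta_j$ and the boundedness of $\Omega$. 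Summing the finitely many contributions bounds $\int_\Omega|u|^p\operatorname{dist}(x,\partial\Omega)^{-sp}$ by $C(\|u\|_{L^p(\Omega)}^p+\iint_{\Omega\times\Omega}|u(x)-u(y)|^p|x-y|^{-N-sp}\,dx\,dy)$, which is the assertion for $sp<1$. For $sp>1$ the localized seminorm controls $\|u\|_{L^p(\Omega)}^p$ by the Poincar\'e inequality recalled in this section and the lower-order term is absorbed; for $sp\le1$ no such Poincar\'e inequality holds, which is why the extra term is genuinely needed there.

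For $sp=1$ the localized half-space inequality fails (the discrete Hardy inequality above has ratio $1$ and diverges logarithmically), so the right-hand side must be taken as the full seminorm over $\mathbb R^{2N}$; here I would simply use that, since $u=0$ on $\mathbb R^N\setminus\Omega$,
\[
\iint_{\mathbb R^{2N}}\frac{|u(x)-u(y)|^p}{|x-y|^{N+sp}}\,dx\,dy\ \ge\ 2\int_\Omega|u(x)|^p\Big(\int_{\mathbb R^N\setminus\Omega}\frac{dy}{|x-y|^{N+sp}}\Big)\,dx ,
\]
and that the uniform exterior cone condition of a Lipschitz domain forces $\int_{\mathbb R^N\setminus\Omega}|x-y|^{-N-sp}\,dy\ge c\,\operatorname{dist}(x,\partial\Omega)^{-sp}$, from which the claim follows; the precise quantitative version is \cite[Theorem 6.5]{hardyy}. (This last argument actually works for every admissible $sp$ but only produces the full seminorm, which is why it is invoked just in the borderline case.) The main obstacle is the first step together with its half-space upgrade: turning the genuinely one-dimensional Hardy inequality into the $N$-dimensional nonlocal statement without losing the sharp behavior, and keeping track of the sign of $sp-1$ — it is precisely this that forces the three distinct formulations, the localized seminorm alone when $sp>1$, the extra $\|u\|_{L^p}$-term when $sp<1$, and the full seminorm when $sp=1$. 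The localization step is routine once the product estimate and the bi-Lipschitz invariance of the seminorm are in hand.
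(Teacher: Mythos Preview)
The paper does not supply a proof of this theorem: it is quoted from \cite[Theorem~1.1 and formula~(17)]{Dyda} for $sp\neq1$ and from \cite[Theorem~6.5]{hardyy} for $sp=1$, so there is no in-paper argument to compare against. Your outline is a reasonable reconstruction of how those cited results are obtained---one-dimensional fractional Hardy on $(0,\infty)$, lift to the half-space, then bi-Lipschitz flattening and a partition of unity for $sp\neq1$, and the exterior-integral/cone estimate for $sp=1$---and this is essentially the route taken in the references.

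Two small cautions on your sketch. First, the ``Poincar\'e inequality recalled in this section'' is the one for the full $\mathbb R^{2N}$-seminorm; to absorb the $L^p$-term into the \emph{localized} seminorm $\iint_{\Omega\times\Omega}$ when $sp>1$ you need the \emph{localized} Poincar\'e inequality, which the paper only alludes to implicitly (it remarks that this fails for $sp\le1$). Since that localized Poincar\'e is itself a consequence of the very Hardy inequality you are proving, you should make sure your argument for $sp>1$ does not become circular---in Dyda's treatment the localized bound is obtained directly, without first producing an $L^p$-remainder to be reabsorbed. Second, the step ``a convexity estimate replaces the numerator by the normal one-dimensional difference up to a remainder reabsorbed into the double integral'' is the vaguest point of your outline; the actual passage from one dimension to the half-space in \cite{Dyda} is a straight integration of the one-dimensional inequality over the tangential variables and needs no reabsorption, so you may want to rewrite that step along those lines.
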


\noindent
As a by-product of the previous result, in any case, by \eqref{norm} and the related Poincar\'e inequality, 
$$
\int_{\Omega}\frac{|u(x)|^p}{\operatorname{dist}(x,\partial\Omega)^{sp}}dx
\leq C\|u\|^p, \,\,\quad
\text{for any $u\in W^{s,p}_{0}(\Omega)$,}
$$
where $C$ is a positive constant depending only on $\Omega,N,p$ and $s$. 
Let us denote by
$$
\rho(x):=\operatorname{dist}(x,\partial\Omega), \,\quad x\in \Omega,
$$
the distance from $x\in\Omega$ to $\partial \Omega$ and by $|\cdot|_p$ the usual norm in the space $L^p(\Omega).$

\vskip5pt
\noindent
We consider the following classes of singular weights.

\begin{definition}[Class of weights $\A_q$]
	\label{classA}
For $q\in[1,p_s^\ast)$, let $\A_q$ denote the class of
measurable functions $h$ such that $h\in L^r(\Omega)$ for some $r\in (1,\infty)$ satisfying
$\frac{1}{r}+\frac{q}{p_s^\ast}<1.$
\end{definition}

\begin{definition}[Class of weights $\B_q$]
	\label{classB}
For $q\in[1,p_s^\ast)$, let $\B_q$ denote the class
of measurable functions $h$ such that $h\rho^{sa}\in L^r(\Omega)$ for some $a\in [0,q-1]$ and $r\in (1,\infty)$ satisfying
$\frac{1}{r}+\frac{a}{p}+\frac{q-a}{p_s^\ast}<1.$
\end{definition}


\noindent
Clearly, $\A_q\subset\B_q,$ by simply choosing $a=0$.
Explicitly, $h$ belongs to the above classes provided that there exist $r>1$
and  $0\leq a\leq q-1$ with
\begin{align*}
 \int_{\Omega} |h(x)|^rdx&<\infty,\quad\text{with\,\, $\frac{1}{r}+\frac{q}{p_s^\ast}<1$, \quad\,\, ($h\in \A_q$),} \\
 \int_{\Omega} |h(x)|^r\rho(x)^{sar}dx&<\infty,\quad\text{with\,\, $\frac{1}{r}+\frac{a}{p}+\frac{q-a}{p_s^\ast}<1$, \quad\,\, ($h\in\B_q$).}
\end{align*}
If, for instance, we consider
$$
h(x)=(1-|x|)^{-\beta},\quad\,\, \Omega=B(0,1),
$$
then $h\in \B_q$ if $\beta<sa+r^{-1}$ for some $r>1$ and  $0\leq a\leq q-1$ with $1/r+a/p+(q-a)/p^*_s<1$.
\vskip4pt
\noindent
The following lemma will be used frequently.

\begin{lemma}
	\label{C}
Let $h\in\B_q$.\ Then there holds
$$
\int_{\Omega}|h(x)||u|^{q-1}|v|dx\leq C\|u\|^{q-1}|v|_b, 
\,\,\quad \text{for every $u,v\in W_0^{s,p}(\Omega)$},
$$
where $b\in(1,p_s^\ast)$ is such that $\frac{1}{r}+\frac{a}{p}+\frac{q-a}{b}=1$.
\end{lemma}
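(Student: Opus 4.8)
The plan is to split the integrand so that the singular part of $h$ is absorbed against a power of $\rho$, the matching power of $|u|$ is controlled by the Hardy inequality, the leftover power of $|u|$ by the Sobolev embedding $W_0^{s,p}(\Omega)\hookrightarrow L^b(\Omega)$, and $|v|$ by one more H\"older factor. By definition of $\B_q$ there are $a\in[0,q-1]$ and $r\in(1,\infty)$ with $h\rho^{sa}\in L^r(\Omega)$ and $\frac1r+\frac ap+\frac{q-a}{p_s^\ast}<1$. First I would check that the exponent $b$ required in the statement is well defined: the function $b\mapsto \frac1r+\frac ap+\frac{q-a}{b}$ is continuous and strictly decreasing on $(q-a,\infty)$, takes a value $<1$ at $b=p_s^\ast$ (respectively tends to $\frac1r+\frac ap<1$ as $b\to\infty$ when $p_s^\ast=\infty$), and tends to a value $>1$ as $b\searrow q-a$; hence there is a unique $b$ with $\frac1r+\frac ap+\frac{q-a}{b}=1$, and automatically $q-a<b<p_s^\ast$, so $b\in(1,p_s^\ast)$ since $a\le q-1$ forces $q-a\ge 1$.

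Next I would use, pointwise a.e.\ on $\Omega$, the factorization
$$
|h|\,|u|^{q-1}\,|v|=\big(|h|\rho^{sa}\big)\cdot\Big(\frac{|u|}{\rho^{s}}\Big)^{a}\cdot|u|^{q-1-a}\cdot|v|,
$$
and apply the generalized H\"older inequality with the four exponents $r$, $p/a$, $b/(q-1-a)$ and $b$, the second and third factors being simply omitted when $a=0$ or $q-1-a=0$. These exponents all lie in $(1,\infty)$: here $p/a>1$ because $\frac ap<1$ is part of the $\B_q$ condition, and $b/(q-1-a)>1$ because $b>q-a>q-1-a$ by the previous step; moreover they are conjugate since $\frac1r+\frac ap+\frac{q-1-a}{b}+\frac1b=\frac1r+\frac ap+\frac{q-a}{b}=1$.

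It then remains to bound the three $u$-dependent factors. The factor $|h|\rho^{sa}$ contributes the finite constant $\|h\rho^{sa}\|_{L^r(\Omega)}$. For the second factor, $\big\|(|u|/\rho^{s})^{a}\big\|_{L^{p/a}}=\big\||u|/\rho^{s}\big\|_{L^p}^{a}\le C^{a/p}\|u\|^{a}$ by the Hardy inequality recorded right after Theorem~\ref{Hardy}, i.e.\ $\int_\Omega |u|^p\rho^{-sp}\,dx\le C\|u\|^p$. For the third, since $b<p_s^\ast$ the embedding $W_0^{s,p}(\Omega)\hookrightarrow L^b(\Omega)$ gives $\big\||u|^{q-1-a}\big\|_{L^{b/(q-1-a)}}=|u|_b^{\,q-1-a}\le C'\|u\|^{q-1-a}$. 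Multiplying the four estimates, the powers of $\|u\|$ add up to $\|u\|^{a+(q-1-a)}=\|u\|^{q-1}$ and $|v|$ contributes $|v|_b$, which is the claimed inequality with $C=C^{a/p}C'\,\|h\rho^{sa}\|_{L^r(\Omega)}$. I do not expect a genuine obstacle here; the only delicate points are verifying that the H\"older exponents are admissible and conjugate, and that the degenerate cases $a=0$, $a=q-1$ and $p_s^\ast=\infty$ are covered uniformly, which the chosen factorization handles.
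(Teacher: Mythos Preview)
Your proof is correct and follows exactly the same approach as the paper: the same factorization $|h|\,|u|^{q-1}\,|v|=(|h|\rho^{sa})(|u|/\rho^s)^{a}|u|^{q-1-a}|v|$, the same H\"older split with exponents $r,\,p/a,\,b/(q-1-a),\,b$, and the same conclusion via the Hardy inequality and the embedding $W_0^{s,p}(\Omega)\hookrightarrow L^b(\Omega)$. Your version is simply more explicit, carefully checking that $b\in(1,p_s^\ast)$ exists, that the H\"older exponents are admissible and conjugate, and that the degenerate cases are covered---all of which the paper leaves implicit.
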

\begin{proof}
If $h\in\B_q$, we have
\begin{equation*}
\int_{\Omega}|h(x)||u|^{q-1}|v|dx=\int_{\Omega}|h\rho^{sa}|\left|\frac{u}{\rho^s}\right|^{a}|u|^{q-1-a}|v|dx
\leq|h\rho^{sa}|_r\ \left|\frac{u}{\rho^s}\right|_p^{a}\ |u|_b^{q-1-a}\ |v|_b.
\end{equation*}
In light of Theorem~\ref{Hardy} and $W_0^{s,p}(\Omega)\hookrightarrow L^b(\Omega)$ we get the conclusion.
\end{proof}

\vskip3pt
\noindent
We now define the operator $A: W_0^{s,p}(\Omega)\to W_0^{-s,p'}(\Omega)$ as
$$
\langle A(u),v\rangle:=\int_{\mathbb{R}^{2N}}\frac{|u(x)-u(y)|^{p-2}(u(x)-u(y))(v(x)-v(y))}{|x-y|^{N+sp}}dxdy,\quad 
\text{for all $u,v\in W_0^{s,p}(\Omega)$.}
$$
A weak solution of problem \eqref{1.2} is a function $u\in W_0^{s,p}(\Omega)$ such that
$$
\langle A(u),v\rangle=\int_{\Omega}f(x,u)vdx,\quad \text{for all $u,v\in W_0^{s,p}(\Omega)$.}
$$
It is easy to see that $A$ satisfies the following compactness condition \cite{IS,MR2640827}:
\vskip4pt
\begin{itemize}
\item[(${\mathcal S}$)] If $\{u_n\}_{n\in{\mathbb N}}\subset W_0^{s,p}(\Omega)$ is such that
$u_n\rightharpoonup u$ in $W_0^{s,p}(\Omega)$  and $\langle A(u_n),u_n-u\rangle\to 0$ as $n\to\infty$,
then $u_n\to u$ in $W_0^{s,p}(\Omega)$ as $n\to\infty$.
\end{itemize}
\vskip1pt
\noindent
Assuming that $f$ satisfies condition \eqref{1.5} for some exponents $q_i\in[1,p_s^\ast)$ and $h_i\in\B_{q_i}$, in light of Lemma~\ref{C}, 
there exists a constant $C > 0$ such that, for all $u, v \in W_0^{s,p}(\Omega)$,
$$
\left|\int_{\Omega} f(x,u)v\, dx\right|\leq \sum_{i=1}^n\int_{\Omega} h_i(x)|u|^{q_i-1}|v|dx\leq
C\sum_{i=1}^n\|u\|^{q_i-1}|v|_{b_i}\leq C_u\|v\|,
$$
so that $f(x,u)\in W_0^{-s,p'}(\Omega)$.\ Weak solutions of \eqref{1.2} are thus critical points of $\Phi:W_0^{s,p}(\Omega)\to{\mathbb R}$,
 $$
 \Phi(u):=\frac{1}{p}\|u\|^p-\int_{\Omega}F(x,u)dx,\qquad F(x,t):=\int_{0}^{t}f(x,s)ds.
 $$
By the property $({\mathcal S})$, we easily obtain the next lemma.

\begin{lemma}[Palais-Smale condition]
	\label{PS1.5}
Assume that $f$ satisfies \eqref{1.5}  for some $q_i\in[1,p_s^\ast)$ and $h_i\in\B_{q_i}.$ Then any
bounded sequence $\{u_n\}_{n\in {\mathbb N}} \subset W_0^{s,p}(\Omega)$ such that $\Phi'(u_n)\to 0$ has a convergent subsequence.\ In particular, bounded Palais-Smale sequences of $\Phi$ are precompact in $W_0^{s,p}(\Omega)$.
\end{lemma}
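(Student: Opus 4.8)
The plan is to combine the compactness property $(\mathcal{S})$ with the estimates from Lemma~\ref{C}. First I would take a bounded sequence $\{u_n\}$ in $W_0^{s,p}(\Omega)$ with $\Phi'(u_n)\to 0$. By reflexivity of $W_0^{s,p}(\Omega)$, after passing to a subsequence we may assume $u_n\rightharpoonup u$ in $W_0^{s,p}(\Omega)$; moreover, by the compact embedding $W_0^{s,p}(\Omega)\hookrightarrow\hookrightarrow L^q(\Omega)$ for any $q\in[1,p_s^\ast)$, we also have $u_n\to u$ strongly in each such $L^q(\Omega)$ and $u_n\to u$ a.e.\ in $\Omega$.

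Next I would show that $\langle A(u_n),u_n-u\rangle\to 0$. By definition, $\Phi'(u_n)=A(u_n)-N_f(u_n)$ where $\langle N_f(u_n),v\rangle=\int_\Omega f(x,u_n)v\,dx$, so
\begin{equation*}
\langle A(u_n),u_n-u\rangle=\langle\Phi'(u_n),u_n-u\rangle+\int_\Omega f(x,u_n)(u_n-u)\,dx.
\end{equation*}
Since $\{u_n-u\}$ is bounded and $\Phi'(u_n)\to0$ in $W_0^{-s,p'}(\Omega)$, the first term tends to $0$. For the second term, using \eqref{1.5} together with Lemma~\ref{C} (applied to each $h_i\in\B_{q_i}$ with exponent $b_i\in(1,p_s^\ast)$), we estimate
\begin{equation*}
\left|\int_\Omega f(x,u_n)(u_n-u)\,dx\right|\leq\sum_{i=1}^n\int_\Omega h_i(x)|u_n|^{q_i-1}|u_n-u|\,dx\leq C\sum_{i=1}^n\|u_n\|^{q_i-1}|u_n-u|_{b_i}.
\end{equation*}
Since $\{u_n\}$ is bounded in $W_0^{s,p}(\Omega)$ and $|u_n-u|_{b_i}\to0$ by the compact embedding into $L^{b_i}(\Omega)$ (as $b_i<p_s^\ast$), this term tends to $0$ as well. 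Hence $\langle A(u_n),u_n-u\rangle\to0$.

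Finally, property $(\mathcal{S})$ immediately yields $u_n\to u$ in $W_0^{s,p}(\Omega)$, proving precompactness of the bounded Palais-Smale sequence. The only point requiring a little care is the verification that the exponent $b_i$ furnished by Lemma~\ref{C} indeed lies strictly below $p_s^\ast$, so that the embedding into $L^{b_i}(\Omega)$ is compact; this is exactly the content of the conclusion of Lemma~\ref{C}, so no separate argument is needed. I expect no genuine obstacle here: the work has already been front-loaded into Lemma~\ref{C} and the abstract property $(\mathcal{S})$, and this lemma is essentially their assembly.
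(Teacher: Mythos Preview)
Your proof is correct and follows essentially the same route as the paper's: extract a weakly convergent subsequence, split $\langle A(u_n),u_n-u\rangle$ into the $\Phi'(u_n)$ term and the nonlinear term, control the latter via Lemma~\ref{C} and the compact embedding into $L^{b_i}(\Omega)$, and conclude by property~$(\mathcal{S})$. The only differences are expository---you spell out why $\langle\Phi'(u_n),u_n-u\rangle\to0$ and flag that $b_i<p_s^\ast$ comes for free from Lemma~\ref{C}---but the argument is the same.
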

\begin{proof}
Since $\{u_n\}_{n\in {\mathbb N}}$ is bounded in $W_0^{s,p}(\Omega)$, up to a subsequence, $u_n\rightharpoonup u$ in $W_0^{s,p}(\Omega)$ and
$|u_n-u|_{b_i}\to 0$ as $n\to\infty$ for $i=1,\dots,n$, since $b_i\in(1,p_s^\ast)$.\ Then, in light of Lemma~\ref{C}, we get
$$
\left|\int_{\Omega}f(x,u_n)(u_n-u)dx\right|\leq \sum_{i=1}^n\int_{\Omega}|h_i(x)||u_n|^{q_i-1}|u_n-u|dx\leq C\sum_{i=1}^n\|u_n\|^{q_i-1}|u_n-u|_{b_i},
$$
for some positive constant $C$. This yields
$$
\lim_{n\to\infty}\int_{\Omega}f(x,u_n)(u_n-u)dx=0,
$$
via the boundedness of $\{u_n\}_{n\in {\mathbb N}}$ in $W_0^{s,p}(\Omega)$ and $u_n\to u$ in $L^{b_i}(\Omega)$ for any $i$. Thus,
$$
\langle A(u_n),u_n-u\rangle=\langle \Phi'(u_n), u_n-u\rangle+\int_{\Omega}f(x,u_n)(u_n-u)dx\to 0.
$$
Hence, $u_n\to u$ in $W_0^{s,p}(\Omega)$ as $n\to\infty$, by means of $({\mathcal S})$.
\end{proof}

\section{Eigenvalue problems}
We consider the eigenvalue problem
\begin{eqnarray}\label{EP}
\begin{cases}
 (-\Delta)_p^su=\lambda h(x)|u|^{p-2}u \quad &\text{in } \Omega,\\
  u=0\quad &\text{in } \mathbb{R}^N\setminus \Omega,
  \end{cases}
\end{eqnarray}
where $h\in\A_p$ is possibly sign-changing with  $|\{x \in \Omega : h(x) >0\}| >0$ and $\lambda$ is a real number.  We consider the first eigenvalue, defined as follows
$$
\mu_1 := \inf \left\{\frac{1}{p} \|u\|^p : u \in W_0^{s,p}(\Omega),\ \int_\Omega \frac{h(x)}{p} |u|^p\, dx =1\right\}.
$$
We have the following

\begin{theorem} \label{positive}
Let $h\in\B_p$. Then
$\mu_1$ is attained by some nonnegative $\phi_1 \in W_0^{s,p}(\Omega)$. Furthermore, $\phi_1 >0$ a.a.\ if $h\geq 0$,
and any two first eigenfunctions are proportional.
\end{theorem}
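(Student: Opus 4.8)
The plan is to use the direct method of the calculus of variations. First I would show that the infimum $\mu_1$ is positive and that the constraint set $\mathcal{M} := \{u \in W_0^{s,p}(\Omega) : \int_\Omega \frac{h}{p}|u|^p\,dx = 1\}$ is nonempty. Nonemptiness follows because $|\{h > 0\}| > 0$: pick any $u_0 \in W_0^{s,p}(\Omega)$ supported (essentially) in $\{h > 0\}$ with $\int_\Omega h |u_0|^p\,dx > 0$ — such a $u_0$ exists by density of $C_0^\infty$ and a measure-theoretic argument — and rescale. Positivity of $\mu_1$ is immediate from Lemma~\ref{C} applied with $q = p$, $v = u$: since $h \in \B_p$ we get $\int_\Omega h |u|^p\,dx \le C\|u\|^{p-1}|u|_b \le C'\|u\|^p$ using the embedding $W_0^{s,p}(\Omega) \hookrightarrow L^b(\Omega)$, so on $\mathcal{M}$ we have $1 \le C'\|u\|^p$, giving $\mu_1 \ge \frac{1}{pC'} > 0$.

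Next I would take a minimizing sequence $\{u_n\} \subset \mathcal{M}$, so $\|u_n\|^p \to p\mu_1$; in particular $\{u_n\}$ is bounded, so up to a subsequence $u_n \rightharpoonup \phi_1$ in $W_0^{s,p}(\Omega)$ and $u_n \to \phi_1$ in $L^b(\Omega)$ by compact embedding (with $b \in (1, p_s^\ast)$ as in Lemma~\ref{C}). The key point is that the constraint functional is continuous along this sequence: writing
\[
\left|\int_\Omega \frac{h}{p}\big(|u_n|^p - |\phi_1|^p\big)\,dx\right| \le \frac{1}{p}\int_\Omega |h|\,\big||u_n|^p - |\phi_1|^p\big|\,dx,
\]
and estimating the integrand via the elementary inequality $||a|^p - |b|^p| \le p(|a|^{p-1} + |b|^{p-1})|a - b|$ followed by Lemma~\ref{C} (or a direct Hölder argument in the spirit of its proof), one gets a bound by $C(\|u_n\|^{p-1} + \|\phi_1\|^{p-1})|u_n - \phi_1|_b \to 0$. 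Hence $\int_\Omega \frac{h}{p}|\phi_1|^p\,dx = 1$, so $\phi_1 \in \mathcal{M}$; in particular $\phi_1 \ne 0$. By weak lower semicontinuity of the norm, $\|\phi_1\|^p \le \liminf \|u_n\|^p = p\mu_1$, so $\frac{1}{p}\|\phi_1\|^p = \mu_1$ and the infimum is attained. Replacing $\phi_1$ by $|\phi_1|$ is harmless since $\||\phi_1|\| \le \|\phi_1\|$ (the Gagliardo seminorm does not increase under taking absolute values, as $||a|-|b|| \le |a-b|$) and $\int_\Omega \frac{h}{p}||\phi_1||^p\,dx = 1$; thus we may take $\phi_1 \ge 0$.

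For the strict positivity when $h \ge 0$: the minimizer $\phi_1$ solves the Euler–Lagrange equation $(-\Delta)_p^s \phi_1 = p\mu_1\, h\, \phi_1^{p-1}$ weakly (after normalizing the Lagrange multiplier), with nonnegative right-hand side, so $\phi_1$ is a nonnegative weak supersolution of $(-\Delta)_p^s \phi_1 \ge 0$; the strong maximum principle for the fractional $p$-Laplacian (as in \cite{IMS1} or \cite{BP}) then forces $\phi_1 > 0$ a.e.\ in $\Omega$, since $\phi_1 \not\equiv 0$. Finally, for proportionality of first eigenfunctions I would invoke the convexity/hidden-convexity argument: along the path $\sigma_t = \big((1-t)\phi^p + t\psi^p\big)^{1/p}$ between two normalized nonnegative first eigenfunctions $\phi, \psi$, the map $t \mapsto \|\sigma_t\|^p$ is convex (this is the discrete Picone / Díaz–Saá inequality for the fractional $p$-Laplacian, see e.g.\ \cite{IMS1,BF}) while each $\sigma_t$ stays on $\mathcal{M}$ (since $h \ge 0$ and the constraint is linear in $|u|^p$), so $\|\sigma_t\|^p \le (1-t)\|\phi\|^p + t\|\psi\|^p = p\mu_1$; combined with $\|\sigma_t\|^p \ge p\mu_1$ this forces equality in the convexity inequality for all $t$, whose equality case is exactly $\phi = \psi$.

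I expect the main obstacle to be the strong maximum principle and the hidden-convexity step: these are the only places where genuinely nonlocal, nonlinear technology is needed, and one must be careful that the weight $h \in \B_p$ (only in some $L^r$ with a weighted power of $\rho$) does not destroy the regularity/positivity arguments — the Hardy inequality of Theorem~\ref{Hardy} is what keeps the relevant integrals finite near $\partial\Omega$, and one should check that the maximum principle is applied on compact subdomains where $h$ is under control, then propagated.
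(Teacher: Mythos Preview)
Your proposal is correct and follows essentially the same route as the paper: the direct method using Lemma~\ref{C} to pass the constraint to the weak limit (the paper writes out exactly your estimate $|\int h|u_n|^p - \int h|u|^p| \le C(\|u_n\|^{p-1}+\|u\|^{p-1})|u_n-u|_b$), then a strong maximum principle for positivity and a hidden-convexity/Picone argument for simplicity. The only cosmetic difference is in the citations: the paper defers positivity to \cite[Theorem~A.1]{BF} and simplicity to \cite{Franzina} rather than sketching them, but these references implement precisely the arguments you outline.
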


\begin{proof}
Since $h\in\B_p$, a standard argument yields the existence of an
eigenfunction $\phi_1 \ge 0$. In fact, notice that, if $\{u_n\}_{n\in{\mathbb N}}
\subset W_0^{s,p}(\Omega)$ is weakly convergent to some $u$, we have
\begin{align*}
\left| \int_{\Omega} h(x)|u_n|^pdx-\int_{\Omega} h(x)|u|^pdx\right|&\leq
C \int_{\Omega} h(x)|u_n|^{p-1}|u_n-u|dx+C\int_{\Omega} h(x)|u_n|^{p-1}|u_n-u|dx \\
&\leq C(\|u_n\|^{p-1}+\|u\|^{p-1})|u_n-u|_b\leq C|u_n-u|_b=o_n(1),
\end{align*}
by Lemma~\ref{C} and since $|u_n-u|_b\to 0$ up to a subsequence, where $b\in(1,p_s^\ast)$ is such that ${1/r}+{a/p}+(p-a)/b=1$, for some $a\in [0,p-1]$.
It follows from \cite[Theorem A.1]{BF} that $\phi_1>0,$ provided $h\geq 0$.
The simplicity follows as in \cite{Franzina}.
\end{proof}

\noindent
Next we show the
boundedness of weak solutions by modifying the argument in \cite{Franzina}.

\begin{proposition}
	\label{bound}
	Let $u$ be any eigenfunction of \eqref{EP}.\ Then $u\in L^\infty (\mathbb{R}^N)$.
\end{proposition}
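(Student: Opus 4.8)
The plan is to run a Moser-type iteration on the equation $\langle A(u),v\rangle=\lambda\int_\Omega h(x)|u|^{p-2}uv\,dx$, using the test function $v=|u|^{(\beta-1)p}u$ (suitably truncated) to bootstrap the $L^q$-integrability of $u$ through an increasing sequence of exponents that diverges to $\infty$. First I would fix the truncation $u_M=\mathrm{sign}(u)\min(|u|,M)$ and test with $v_M=u\,|u_M|^{(\beta-1)p}$ for $\beta\ge 1$; on the left-hand side the elementary convexity inequality for the fractional $p$-Laplacian gives a lower bound of the form
\[
\langle A(u),v_M\rangle \ \ge\ C_\beta\,\bigl\|\,|u_M|^{\beta-1}u_M\,\bigr\|^p\ \ge\ C_\beta'\,\bigl|\,|u_M|^{\beta-1}u_M\,\bigr|_{p_s^\ast}^p,
\]
where the last step is the fractional Sobolev embedding $W_0^{s,p}(\Omega)\hookrightarrow L^{p_s^\ast}(\Omega)$ (if $N>sp$; for $N\le sp$ the statement is immediate from the embedding into $L^\infty$, so I would dispose of that case first). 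This is the standard ``$|a|^{p-2}a-|b|^{p-2}b$ paired with $a|a_M|^{(\beta-1)p}-b|b_M|^{(\beta-1)p}$ is bounded below by $c_\beta||a_M|^{\beta-1}a_M-|b_M|^{\beta-1}b_M|^p$'' computation; I will cite the analogous step in \cite{Franzina} rather than redo it.

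Next I would estimate the right-hand side $\lambda\int_\Omega h(x)|u|^{p-2}u\,v_M\,dx\le|\lambda|\int_\Omega|h(x)|\,|u|^{(\beta-1)p}\,|u|^p\,dx$. Since $h\in\A_p\subset\B_p$, there is $r>1$ with $\tfrac1r+\tfrac{p}{p_s^\ast}<1$; writing $\tfrac1r+\tfrac{p}{t}=1$ gives some $t<p_s^\ast$, and Hölder with exponents $r$ and $t/p$ bounds the right-hand side by $|h|_r\,\bigl|\,|u|^{\beta}\,\bigr|_{t}^{p}=|h|_r\,\bigl|\,|u_M|^{\beta-1}u_M\cdot(|u|/|u_M|)^{\cdots}\bigr|$ — more cleanly, by $|h|_r\,|u|_{\beta t}^{\beta p}$. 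Combining with the lower bound and letting $M\to\infty$ (monotone convergence, the truncation having served only to make $v_M$ an admissible test function), I obtain the iteration inequality
\[
|u|_{\beta p_s^\ast}^{\beta p}\ \le\ C\,|\lambda|\,|h|_r\,|u|_{\beta t}^{\beta p},
\]
valid whenever the right-hand side is finite. Setting $\beta_0 p_s^\ast$-integrability from $u\in L^{p_s^\ast}(\Omega)$ and iterating $\beta_{k+1} t=\beta_k p_s^\ast$, i.e. $\beta_{k+1}=(p_s^\ast/t)\beta_k$ with ratio $p_s^\ast/t>1$, the exponents $\beta_k p_s^\ast\to\infty$; tracking the constants $C_k$ and taking the limit in the usual way yields $\|u\|_{L^\infty}\le \bigl(\prod_k C_k^{1/\beta_k}\bigr)\|u\|_{L^{p_s^\ast}}<\infty$, since $\sum_k 1/\beta_k<\infty$ and $\sum_k (\log k)/\beta_k<\infty$. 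Because $u\equiv 0$ outside $\Omega$, this gives $u\in L^\infty(\mathbb R^N)$.

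The main obstacle is making the test function $v_M=u\,|u_M|^{(\beta-1)p}$ genuinely admissible, i.e. verifying $v_M\in W_0^{s,p}(\Omega)$ and controlling $\langle A(u),v_M\rangle$ rigorously: one must check that the double integral defining $A$ applied to this nonlinear truncation is absolutely convergent and that the pointwise convexity inequality integrates correctly, including the behaviour of the nonlocal ``tail'' where one of $x,y$ lies outside $\Omega$. This is precisely the delicate point handled in \cite{Franzina} for the eigenvalue problem with $h\equiv 1$, and the only genuinely new ingredient here is the presence of the weight $h$, which is absorbed cleanly by the $\A_p$ (hence $\B_p$) condition through the single Hölder step above; so I would structure the proof as ``follow \cite{Franzina} for the operator side, insert the weighted Hölder estimate on the source side.'' A secondary bookkeeping point is ensuring the constant in the Sobolev embedding and the factor $C_\beta$ from the convexity inequality combine so that $\sum_k \beta_k^{-1}\log C_k$ converges; this is routine once $C_\beta$ is shown to grow at most polynomially in $\beta$.
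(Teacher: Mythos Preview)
Your approach is correct but follows a different iteration scheme than the paper. The paper uses a \emph{De Giorgi} (level-set) iteration: it sets $w_k:=(u-(1-2^{-k}))_+$, tests the equation with $w_{k+1}$, and derives a recursion $U_{k+1}\le C_0 b^k U_k^{1+\alpha}$ for $U_k:=|w_k|_q^p$ with $q=pr/(r-1)$; the fast-geometric-convergence lemma from Ladyzhenskaya--Uraltseva then gives $U_k\to 0$ provided $U_0$ is small, and the smallness is arranged by homogeneity of the eigenvalue problem. By contrast, you run a \emph{Moser} iteration with power test functions $v_M=u\,|u_M|^{(\beta-1)p}$, bootstrapping $|u|_{\beta p_s^\ast}\le C_\beta^{1/\beta p}|u|_{\beta t}$ along a geometric sequence $\beta_k=(p_s^\ast/t)^k$.

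Both routes hinge on the same weighted H\"older step, namely that $h\in\A_p$ gives $r>1$ with $1/r+p/p_s^\ast<1$, so the auxiliary exponent ($q=pr/(r-1)$ in the paper, $t=pr/(r-1)$ in yours) lies strictly below $p_s^\ast$ and furnishes the gap needed to iterate. The De Giorgi route keeps the operator side trivial (only the elementary inequality for $(u-k)_+$ is needed) and exploits homogeneity to avoid tracking constants, while your Moser route gives a more quantitative bound $\|u\|_\infty\le C\|u\|_{p_s^\ast}$ but requires the pointwise convexity estimate for the nonlocal operator against power test functions and control of the $\beta$-dependence of $C_\beta$. Your identification of the admissibility of $v_M$ and the polynomial growth of $C_\beta$ as the only delicate points is accurate; these are indeed handled in the literature you cite, so the argument goes through.
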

\begin{proof}
	The proof follows the line of \cite{Franzina}.
	We shall provide the details for the sake of completeness.
	Denoting $u_+ := \max \{u,0\},$ it suffices to show that, for any weak solution  $u\in W_0^{s,p}(\Omega),$
	\begin{equation}\label{ineq}
	\text{$|u|_\infty\leq 1$ provided that $|u_+|_q\leq \delta$},
	\quad q:=\frac{pr}{r-1}\in (1,p_s^\ast),
	\end{equation}
	for some $\delta>0.$ For each $k \in \mathbb{N}\cup \{0\}$, set $w_k:=(u-(1-1/2^k))_+.$
	Then $w_k\in W_0^{s,p}(\Omega)$ and
	\begin{equation*}
	w_{k+1}(x)\leq w_{k}(x)\quad \text{a.a.},\quad
	u(x)< (2^{k+1}-1)w_{k}(x)\quad \text{a.a. $x\in\{w_{k+1}>0\}$},
	\end{equation*}
	and $\{w_{k+1}>0\}\subseteq \{w_k>2^{-(k+1)}\}$.
	Moreover, for a measurable $v$, we have the inequality
	\begin{equation}\label{ineq.v}
	|v(x)-v(y)|^{p-2}(v_+(x)-v_+(y))(v(x)-v(y))\geq |v_+(x)-v_+(y)|^p
	\end{equation}
	for a.a.\ $x,y\in\mathbb{R}^N.$
	Applying \eqref{ineq.v} for $v=u-(1-1/2^{k+1})$, we have $v_+=w_{k+1}$ and
	\begin{align*}
	\|w_{k+1}\|^p &\leq \int_{\mathbb{R}^{2N}}\frac{|u(x)-u(y)|^{p-2}(u(x)-u(y)(w_{k+1}(x)-w_{k+1}(y))}{|x-y|^{N+sp}}dxdy\\
	&=\lambda\int_{\{w_{k+1}>0\}}h(x)|u(x)|^{p-2}u(x)w_{k+1}(x)dx.
	\end{align*}
	Whence, by the above stated properties, we get
	\begin{equation*}
	\|w_{k+1}\|^p \le|\lambda|(2^{k+1}-1)^{p-1}\int_{\{w_{k+1}>0\}}|h(x)||w_k(x)|^{p}dx.
	\end{equation*}
	By the H\"older inequality, we then obtain
	\begin{equation*}
	\|w_{k+1}\|^p \leq |\lambda|(2^{k+1}-1)^{p-1}|h|_r U_k,\quad  U_k:=|w_k|^p_q.
	\end{equation*}
	Let $\bar{q}$ be such that $q<\bar{q}<p_s^\ast.$ Using again H\"older inequality, we easily get
	\begin{equation*}
	U_{k+1}\leq C\|w_{k+1}\|^p|\{w_{k+1}>0\}|^{\frac{p(\bar{q}-q)}{q\bar{q}}}\label{E2}
	\end{equation*}
	by  the embedding $W_0^{s,p}(\Omega)\hookrightarrow L^{\bar{q}}(\Omega)$.
	On the other hand, Chebychev's inequality entails
	\begin{equation*}
	|\{w_{k+1}>0\}|\leq|\{w_k>2^{-(k+1)}\}|\leq 2^{q(k+1)}U_k^{q/p}.
	\end{equation*}
	Combining the previous inequality yields
	$U_{k+1}\leq C|\lambda|(2^{k+1}-1)^{p-1} |h|_{r}2^{\frac{p(\bar{q}-q)(k+1)}{\bar{q}}}U_k^{1+\frac{\bar{q}-q}{\bar{q}}}$, namely
	\begin{equation*}
	U_{k+1}\leq C_0 b^kU_k^{1+\alpha},\quad \ k\in {\mathbb N},\,\,
	C_0>0,\,\,\alpha>0,\,\, b>1.
	\end{equation*}
	By \cite[Lemma 4.7, Ch. II]{Ladyzhenskaya}, this yields
	$U_k\to 0$ as $k\to\infty$ if $U_0\leq{C_0}^{-\frac{1}{\alpha}} b^{-\frac{1}{\alpha^2}}$.
	Hence, if we choose
	$$
	|u_+|_q\leq {C_0}^{-\frac{1}{p\alpha}} b^{-\frac{1}{p\alpha^2}}=:\delta,
	$$
	which is made possible by a simple scaling argument due to the homogeneity of the problem,
	we conclude $U_k\to 0$ as $k\to\infty$, namely $|u|_\infty\leq 1$, via Fatou's Lemma, concluding the proof.
\end{proof}

\noindent
Next we consider the eigenvalue problem
\begin{eqnarray}\label{EQ}
\begin{cases}
 (-\Delta)_p^su=\lambda h(x)|u|^{q-2}u \quad &\text{in } \Omega,\\
  u=0\quad &\text{in } \mathbb{R}^N\setminus \Omega,
  \end{cases}
\end{eqnarray}
where $q \in [1,p^\ast_s)$ and $h \in \B_q$ with  $|\{x \in \Omega : h(x) >0\}| >0.$
We shall produce a sequence of eigenvalues for problem \eqref{EQ}, following the argument in \cite{PS}. Let
\[
J(u) := \int_\Omega \frac{h(x)}{q}|u|^q\, dx, \quad u \in W_0^{s,p}(\Omega),
\]
and set (we use the notations of \cite[Chapter 4]{MR2640827})
\[
\Psi(u) := \frac{1}{J(u)}, \quad u \in \mathcal{M},
\qquad  \mathcal{M}:= \left\{u \in W_0^{s,p}(\Omega):\, \frac{1}{p} \|u\|^p = 1 \text{ and } J(u) > 0 \right\}.
\]
Then $\mathcal{M}$ is nonempty and positive eigenvalues and associated eigenfunctions of
\eqref{EQ} on $\mathcal{M}$ coincide with critical values and critical points of $\Psi$, respectively. By Lemma \ref{C}, we have
\[
0 < J(u) \le C \|u\|^q \le C, \quad \text{for all $u \in \mathcal{M}$},
\]
for some constant $C > 0$, and hence $\lambda_1 := \inf_{u \in \mathcal{M}}\, \Psi(u) > 0$. A slight variant of Lemma \ref{PS1.5} yields the following

\begin{lemma}\label{PS1}
For all $c \in \mathbb{R}$, $\Psi$ satisfies the Palais-Smale condition, 
namely every sequence $\{u_n\}_{n\in {\mathbb N}} \subset \mathcal{M}$ with $\Psi(u_n) \to c$ and $\Psi'(u_n) \to 0$ has a subsequence converging to some $u \in \mathcal{M}$.
\end{lemma}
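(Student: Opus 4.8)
The plan is to adapt the proof of Lemma~\ref{PS1.5} to the constrained functional $\Psi$ on the manifold $\mathcal{M}$, using the fact that Palais--Smale sequences of $\Psi$ are automatically bounded because the constraint $\frac1p\|u_n\|^p=1$ forces $\|u_n\|=p^{1/p}$ for all $n$. The main work is therefore to show that once $u_n\rightharpoonup u$ weakly in $W_0^{s,p}(\Omega)$, one in fact has strong convergence and that the limit $u$ stays in $\mathcal{M}$.

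First I would recall that by definition of $\mathcal{M}$ the sequence $\{u_n\}$ is bounded, so up to a subsequence $u_n\rightharpoonup u$ in $W_0^{s,p}(\Omega)$ and, by the compact embedding $W_0^{s,p}(\Omega)\hookrightarrow L^{b}(\Omega)$ for the exponent $b\in(1,p_s^\ast)$ from Lemma~\ref{C} (with $q$ in place of the generic exponent), $u_n\to u$ in $L^b(\Omega)$. Next I would unwind the condition $\Psi'(u_n)\to 0$: since $\Psi=1/J$ on $\mathcal{M}$, the constrained derivative $\Psi'(u_n)\to0$ means there are Lagrange multipliers $\mu_n\in\mathbb{R}$ with
$$
-\frac{1}{J(u_n)^2}\,\langle J'(u_n),v\rangle-\mu_n\,\langle A(u_n),v\rangle=o_n(1)\|v\|,\qquad v\in W_0^{s,p}(\Omega).
$$
Testing with $v=u_n$ and using $\langle A(u_n),u_n\rangle=\|u_n\|^p=p$ together with $\langle J'(u_n),u_n\rangle=qJ(u_n)$ identifies $\mu_n$ up to $o_n(1)$, and since $J(u_n)=1/\Psi(u_n)\to1/c>0$ is bounded away from $0$ and $\infty$ (using $c\ge\lambda_1>0$), the multipliers $\mu_n$ are bounded and bounded away from $0$.

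The key step is then to feed $v=u_n-u$ into the equation and estimate $\langle J'(u_n),u_n-u\rangle=\int_\Omega h(x)|u_n|^{q-2}u_n(u_n-u)\,dx$. Exactly as in Lemma~\ref{PS1.5}, I would bound this by $C\|u_n\|^{q-1}|u_n-u|_b$ via Lemma~\ref{C}, which tends to $0$ by the strong $L^b$ convergence and the uniform bound on $\|u_n\|$. Combined with the (bounded, bounded-below) multipliers this gives $\langle A(u_n),u_n-u\rangle\to0$, whence property $({\mathcal S})$ delivers $u_n\to u$ strongly in $W_0^{s,p}(\Omega)$. Strong convergence immediately yields $\frac1p\|u\|^p=1$ and, again by Lemma~\ref{C}, $J(u_n)\to J(u)$, so $J(u)=1/c>0$; hence $u\in\mathcal{M}$ and $\Psi(u)=c$, $\Psi'(u)=0$ by continuity, completing the proof.

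The main obstacle, and the only place requiring genuine care, is controlling the Lagrange multipliers: one must rule out $\mu_n\to0$ (which would decouple the equation from $A(u_n)$ and prevent using $({\mathcal S})$) and $\mu_n\to\infty$. Both are handled by the two-sided bound on $J(u_n)=1/\Psi(u_n)$ coming from $\Psi(u_n)\to c\in[\lambda_1,\infty)$ and from $J(u)\le C\|u\|^q=C\,p^{q/p}$; everything else is a routine repetition of the estimates already established for $\Phi$.
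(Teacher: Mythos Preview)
Your proposal is correct and follows precisely the route the paper indicates: the paper states only that ``a slight variant of Lemma~\ref{PS1.5} yields'' the result, and your argument is exactly that variant, with the constraint $\frac1p\|u_n\|^p=1$ supplying boundedness and the Lagrange-multiplier identification (via testing with $v=u_n$) providing the nondegeneracy needed to invoke property~$({\mathcal S})$. The care you take with the two-sided control of $\mu_n$ through $\Psi(u_n)\to c\ge\lambda_1>0$ is the only genuinely new step, and you handle it correctly.
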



Although one can obtain an increasing and unbounded sequence of critical values of $\Psi$ via standard minimax schemes, we
prefer to use a cohomological index as in Perera \cite{P}, which provides additional topological information about the associated critical points.
Let us recall the definition of the $\mathbb{Z}_2$-cohomological index of Fadell and Rabinowitz \cite{MR57:17677}. Let $W$ be a Banach space. For a symmetric subset $M$ of $W \setminus \{0\}$, let $\overline{M} = M/\mathbb{Z}_2$ be the quotient space of $M$ with each $u$ and $-u$ identified, let $f : \overline{M} \to \mathbb{R}\text{P}^\infty$ be the classifying map of $\overline{M}$, and let $f^\ast : H^\ast(\mathbb{R}\text{P}^\infty) \to H^\ast(\overline{M})$ be the induced homomorphism of the Alexander-Spanier cohomology rings. Then the cohomological index of $M$ is defined by
\[
i(M) = \begin{cases}
\sup\, \left\{m \ge 1 : f^\ast(\omega^{m-1}) \ne 0\right\}, & M \ne \emptyset,\\[5pt]
0, & M = \emptyset,
\end{cases}
\]
where $\omega \in H^1(\mathbb{R}\text{P}^\infty)$ is the generator of the polynomial ring $H^\ast(\mathbb{R}\text{P}^\infty) = \mathbb{Z}_2[\omega]$. For example, the classifying map of the unit sphere $S^{m-1}$ in $\mathbb{R}^m$, $m \ge 1$, is the inclusion $\mathbb{R}\text{P}^{m-1} \subset \mathbb{R}\text{P}^\infty$, which induces isomorphisms on $H^q$ for $q \le m-1$, so $i(S^{m-1}) = m$.
Let $\mathcal{F}$ denote the class of symmetric subsets of $\mathcal{M}$, and set
\[
\lambda_k := \inf_{\substack{M \in \mathcal{F}\\[1pt]
i(M) \ge k}}\, \sup_{u \in M}\, \Psi(u), \quad k \ge 1.
\]
Then $\{\lambda_k\}_{k\in {\mathbb N}}$ is a sequence of positive eigenvalues of \eqref{EQ}, $\lambda_k \nearrow + \infty$, and
\begin{equation} \label{index}
i(\left\{u \in \mathcal{M} : \Psi(u) \le \lambda_k \right\}) = i(\left\{u \in \mathcal{M} : \Psi(u) < \lambda_{k+1}\right\}) = k
\end{equation}
if $\lambda_k < \lambda_{k+1}$, see \cite[Propositions 3.52 and 3.53]{MR2640827}.
As a simple application, we consider the problem
\begin{equation} \label{EQwolambda}
\begin{cases}
 (-\Delta)_p^su= h(x)|u|^{q-2}u \quad &\text{in } \Omega,\\
  u=0\quad &\text{in } \mathbb{R}^N\setminus \Omega.
  \end{cases}
\end{equation}

\noindent
By arguing as in the proof of \cite[Theorem 3.2]{PS}, we have the following
\begin{theorem}
	\label{mult}
Let $q \in [1,p^\ast_s)\setminus\{p\}$ and $h \in \B_q$ with $|\{x \in \Omega : h(x) >0\}| >0.$
Then problem \eqref{EQwolambda} admits a sequence of nontrivial weak solutions $\{u_n\}_{n\in {\mathbb N}}\subset W_0^{s,p}(\Omega)$ such that
\begin{enumerate}
\item[${\rm i})$] if $q < p$, then $\|u_n\| \to 0$ as $n\to\infty$,
\item[${\rm ii})$] if $q > p$, then $\|u_n\| \to \infty$ as $n\to\infty$.
\end{enumerate}
\end{theorem}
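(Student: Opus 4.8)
The plan is to derive Theorem~\ref{mult} as a scaling corollary of the eigenvalue sequence for \eqref{EQ} constructed above. The key observation is that \eqref{EQwolambda} is the eigenvalue problem \eqref{EQ} ``with $\lambda=1$'', and that because $(-\Delta)_p^s$ is positively $(p-1)$-homogeneous while the right-hand side is positively $(q-1)$-homogeneous with $q\neq p$, dilating an eigenfunction of \eqref{EQ} by a factor $t>0$ dilates its eigenvalue by $t^{p-q}$. Thus every eigenpair of \eqref{EQ} yields, after the right dilation, a nontrivial solution of \eqref{EQwolambda}, and the norm of that solution is a fixed power of the original eigenvalue; feeding the unbounded eigenvalue sequence into this procedure produces the desired solutions together with the asymptotics in i) and ii).

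In detail, I would first invoke the construction recalled above: by the Palais--Smale condition of Lemma~\ref{PS1} and the cohomological-index minimax theory of \cite[Chapter~4]{MR2640827}, for each $k$ there is a critical point $\hat u_k\in\mathcal{M}$ of $\Psi$ with $\Psi(\hat u_k)=\lambda_k$, so in particular $\|\hat u_k\|^p=p$, and $\hat u_k$ is an eigenfunction of \eqref{EQ}: for some $\Lambda_k>0$,
\[
\langle A(\hat u_k),v\rangle=\Lambda_k\int_\Omega h(x)\,|\hat u_k|^{q-2}\hat u_k\,v\,dx,\qquad\text{for all }v\in W_0^{s,p}(\Omega).
\]
Testing this with $v=\hat u_k$ and using $\hat u_k\in\mathcal{M}$ shows that $\Lambda_k$ is a fixed positive multiple of $\lambda_k$, hence $\Lambda_k\nearrow+\infty$; after passing to a subsequence I may assume $\{\Lambda_k\}$ strictly increasing. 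Next, since $q\neq p$, I would set $t_k:=\Lambda_k^{1/(q-p)}>0$ and $u_k:=t_k\,\hat u_k$. Then $A(t_k\hat u_k)=t_k^{p-1}A(\hat u_k)$ and $|\hat u_k|^{q-2}\hat u_k=t_k^{\,1-q}|u_k|^{q-2}u_k$, so the identity above becomes $\langle A(u_k),v\rangle=t_k^{\,p-q}\Lambda_k\int_\Omega h\,|u_k|^{q-2}u_k\,v\,dx=\int_\Omega h\,|u_k|^{q-2}u_k\,v\,dx$ by the choice of $t_k$; hence $u_k$ is a weak solution of \eqref{EQwolambda}, nontrivial since $\hat u_k\in\mathcal{M}$. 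Finally $\|u_k\|=t_k\|\hat u_k\|=p^{1/p}\Lambda_k^{1/(q-p)}$, so the norms are pairwise distinct and the $u_k$ are thus pairwise distinct; moreover, if $q<p$ then $1/(q-p)<0$ and $\Lambda_k\to+\infty$ give $\|u_k\|\to 0$, while if $q>p$ then $1/(q-p)>0$ gives $\|u_k\|\to+\infty$. This reproduces, in the fractional setting, the argument of \cite[Theorem~3.2]{PS}.

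Given the machinery already set up in Section~3, the proof is essentially soft, and I do not expect a serious obstacle. The two points needing care are: (i) the attainment of the minimax levels $\lambda_k$ as critical values of $\Psi$ on the $C^1$ Finsler manifold $\mathcal{M}$, which rests on Lemma~\ref{PS1} together with the deformation lemmas of \cite[Chapter~4]{MR2640827}; and (ii) the bookkeeping of the homogeneity exponents in the dilation step --- getting the sign of $1/(q-p)$ right is precisely what produces the dichotomy between i) and ii). If $q=1$ one must, in addition, restrict to the open subset of $\mathcal{M}$ on which the relevant functionals are differentiable, since $J$ and $\Psi$ fail to be $C^1$ at functions vanishing on a set of positive measure; this is handled exactly as in \cite{PS}.
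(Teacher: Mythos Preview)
Your proposal is correct and follows essentially the same approach as the paper, which merely cites \cite[Theorem~3.2]{PS}: you exploit the already-constructed unbounded eigenvalue sequence $\{\lambda_k\}$ and use the mismatch in homogeneity degrees ($p-1$ for $A$, $q-1$ for the right-hand side) to rescale each eigenfunction into a solution of \eqref{EQwolambda}, reading off the norm asymptotics from the sign of $1/(q-p)$. Your bookkeeping is accurate (in particular $\Lambda_k=\tfrac{p}{q}\lambda_k$ and $\|u_k\|=p^{1/p}\Lambda_k^{1/(q-p)}$), and the caveat you flag about $q=1$ is appropriate.
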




\section{Critical groups}

\noindent
In this section we compute the critical groups at zero of the functional
\[
\Phi(u) = \frac{1}{p} \|u\|^p-\frac{\lambda}{p}\int_\Omega h(x) |u|^pdx-\int_{\Omega} G(x,u)dx, \,\,\quad u \in W_0^{s,p}(\Omega),
\]
where $G(x,t) = \int_0^t g(x,\tau)\, d\tau$, which is associated with the problem
\begin{equation} \label{perturb}
\begin{cases}
 (-\Delta)_p^su= \lambda h(x)|u|^{p-2}u + g(x,u) \quad &\text{in } \Omega,\\
  u=0\quad &\text{in } \mathbb{R}^N\setminus \Omega,
  \end{cases}
\end{equation}
where $\lambda \ge 0$ is a parameter, $h \in \B_p$ with  $|\{x \in \Omega : h(x) >0\}| >0$ and
$g$ is a Carath\'{e}odory function on $\Omega \times \mathbb{R}$ satisfying the subcritical $p$-superlinear growth condition
\begin{equation}\label{ggrowth}
|g(x,t)|\leq\sum_{i=1}^{n}K_i(x)|t|^{q_i-1}\quad \mbox{for a.a.}~ x\in\Omega~\mbox{and all}~ t\in \mathbb{R}
\end{equation}
for some $q_i \in (p,p_s^\ast)$ and $K_i \in \B_{q_i}$.
The critical groups of $\Phi$ at zero are given by
\begin{equation} \label{criticalgroup}
C^q(\Phi,0) := H^q(\Phi^0 \cap U,\Phi^0 \cap U \setminus \{0\}), \quad q \ge 0,
\end{equation}
where $\Phi^0 :=\Phi^{-1}((-\infty,0])$, $U$ is any neighborhood of $0$, and
$H$ denotes the Alexander-Spanier cohomology with $\mathbb{Z}_2$-coefficients.
Following the steps in \cite{PS}, we can obtain

\begin{theorem}[Critical groups I]
	\label{Theorem 4.1}
Assume that $h \in \B_p$ with $|\{x \in \Omega : h(x) >0\}|>0$, $g$ satisfies
\eqref{ggrowth} and $0$ is an isolated critical point of $\Phi$. Then we have
\begin{enumerate}
\item $C^0(\Phi,0) \approx \mathbb{Z}_2$ and $C^q(\Phi,0) = 0$ for $q \ge 1$ in the following cases:
\begin{enumerate}
\item $0 \le \lambda < \lambda_1$;
\item $\lambda = \lambda_1$ and $G(x,t) \le 0$ for a.a. $x \in \Omega$ and all $t \in \mathbb{R}$.
\end{enumerate}
\item $C^k(\Phi,0) \ne 0$ in the following cases:
\begin{enumerate}
\item $\lambda_k < \lambda < \lambda_{k+1}$;
\item $\lambda = \lambda_k < \lambda_{k+1}$ and $G(x,t) \ge 0$ for a.a. $x \in \Omega$ and all $t \in \mathbb{R}$;
\item $\lambda_k < \lambda_{k+1} = \lambda$ and $G(x,t) \le 0$ for a.a. $x \in \Omega$ and all $t \in \mathbb{R}$.
\end{enumerate}
\end{enumerate}
\end{theorem}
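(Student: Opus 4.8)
The plan is to write $\Phi=\Phi_0-\int_\Omega G(x,\cdot)\,dx$, where $\Phi_0(u):=\frac1p\|u\|^p-\frac{\lambda}{p}\int_\Omega h|u|^p\,dx$ is the $p$-homogeneous principal part, and to show that near $0$ the behaviour of $\Phi$ is governed by $\Phi_0$, the nonlinear term being of higher order. From \eqref{ggrowth} and Lemma~\ref{C} one gets $\bigl|\int_\Omega G(x,u)\,dx\bigr|\le\sum_{i=1}^n\frac{1}{q_i}\int_\Omega K_i|u|^{q_i}\,dx\le C\sum_{i=1}^n\|u\|^{q_i}$, and since every $q_i>p$ this is $o(\|u\|^p)$ as $u\to0$. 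Throughout, $J(u)=\int_\Omega\frac{h}{p}|u|^p\,dx$ and the $\lambda_k$ are the variational eigenvalues of Section~3 taken with $q=p$. On the cone $\{J>0\}$ write $\psi(u):=\frac1p\|u\|^p/J(u)$ for the $0$-homogeneous extension of $\Psi$; then $\Phi_0(u)=\frac1p\|u\|^p\,(1-\lambda/\psi(u))$ when $J(u)>0$, while $\Phi_0(u)\ge\frac1p\|u\|^p$ when $J(u)\le0$ (as $\lambda\ge0$). Finally, the definition of $\lambda_1$ and the $p$-homogeneity of $J$ give $\lambda_1 J(u)\le\frac1p\|u\|^p$ for every $u$, i.e. $\Phi_0(u)\ge(1-\lambda/\lambda_1)\,\frac1p\|u\|^p$.

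\textbf{Part (1): $0$ is a local minimiser.} In case (a), $\lambda<\lambda_1$ gives $\Phi_0(u)\ge c\|u\|^p$ with $c>0$, so $\Phi(u)\ge c\|u\|^p-C\sum_i\|u\|^{q_i}>0$ for $0<\|u\|\le\delta$: a strict local minimum. In case (b), $\lambda=\lambda_1$ gives $\Phi_0\ge0$ on all of $W_0^{s,p}(\Omega)$, and $G\le0$ gives $-\int_\Omega G(x,u)\,dx\ge0$, so $\Phi\ge0=\Phi(0)$: a (global, hence local) minimum. In either case, $0$ being an isolated critical point which is a local minimiser forces $C^0(\Phi,0)\approx\mathbb{Z}_2$ and $C^q(\Phi,0)=0$ for $q\ge1$, which is a standard fact (see e.g.\ \cite{MR2640827}).

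\textbf{Part (2): a cohomological local splitting at $0$ in dimension $k$.} Put $A_0:=\{u\in\mathcal{M}:\Psi(u)\le\lambda_k\}$; since $\lambda_k<\lambda_{k+1}$ in all three cases, $i(A_0)=k$ by \eqref{index}. On the truncated cone $\{tu:u\in A_0,\ 0<t\le t_0\}$ one has $\Phi_0(u)\le(1-\lambda/\lambda_k)\,\frac1p\|u\|^p$: this is $\le-c\|u\|^p<0$ in cases (a) and (c) (where $\lambda>\lambda_k$), and combined with the higher-order bound yields $\Phi(u)<0$ for $0<\|u\|\le\delta$; in case (b) it is $\le0$, and $G\ge0$ upgrades this to $\Phi\le\Phi_0\le0$. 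Dually, set $P:=\{u\ne0:J(u)\le0\}\cup\{u\ne0:J(u)>0,\ \psi(u)\ge\lambda_{k+1}\}$; there $\Phi_0(u)\ge(1-\lambda/\lambda_{k+1})\,\frac1p\|u\|^p$, so $\Phi(u)\ge c\|u\|^p-C\sum_i\|u\|^{q_i}>0$ for $0<\|u\|\le\delta$ in cases (a),(b) (where $\lambda<\lambda_{k+1}$), whereas in case (c) $\Phi_0\ge0$ and $G\le0$ give $\Phi\ge\Phi_0\ge0$. The equivariant radial retraction identifies $S_\delta\setminus P$ with $\{u\in\mathcal{M}:\Psi(u)<\lambda_{k+1}\}$, whose index is $k$ by \eqref{index}, and $A_0$ lies in this set. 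Thus $A_0$ and $P$ realise a cohomological local splitting of $\Phi$ at $0$ in dimension $k$, and since $0$ is an isolated critical point the corresponding abstract principle gives $C^k(\Phi,0)\ne0$, exactly as in \cite{PS} (cf.\ also Section~5).

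\textbf{Main obstacle.} The routine ingredients are the bound $\int_\Omega G(x,u)\,dx=o(\|u\|^p)$, immediate from Lemma~\ref{C}, and the $p$-homogeneity bookkeeping for $\Phi_0$. The crux is the topological step in part (2): verifying that $A_0$ and $P$ genuinely realise a cohomological local splitting and deducing $C^k(\Phi,0)\ne0$. This hinges on the index identities \eqref{index} and on the $\mathbb{Z}_2$-cohomological index machinery; one works throughout with the symmetric sets $A_0,P$ (built from the even quantities $\|\cdot\|$, $J$, $\Psi$) rather than with the non-even functional $\Phi$ itself, and the passage from the cone/complement picture to the pair $(\Phi^0\cap U,\Phi^0\cap U\setminus\{0\})$ uses only that $0$ is isolated, no nondegeneracy or $C^2$ structure being available for the fractional $p$-Laplacian. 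Apart from the use of Lemma~\ref{C} in place of the classical weighted Hölder--Hardy estimates, the argument follows \cite{PS}.
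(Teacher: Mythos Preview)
Your proposal is correct and follows essentially the same route as the paper, which simply defers to \cite{PS}: you establish $\int_\Omega G(x,u)\,dx=o(\|u\|^p)$ via Lemma~\ref{C}, show $0$ is a local minimiser in part~(1), and in part~(2) produce a cohomological local splitting in dimension $k$ using the cones $\{I\le\lambda_k J\}$ and $\{I\ge\lambda_{k+1}J\}$ together with the index identities~\eqref{index}. Your sets $A_0$ and $P$ are exactly the radial-retraction and complement descriptions of these cones, so the argument matches the one sketched in \cite{PS} and \cite{MR2640827}.
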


\noindent
In the absence of a direct sum decomposition, the main technical tool we use to get an estimate of the critical groups is the notion of a cohomological local splitting introduced in \cite{MR2640827}.
When $sp > N$, it suffices to assume the sign conditions on $G$ in Theorem \ref{Theorem 4.1} for small $|t|$ by the imbedding $W^{s,\, p}_0(\Omega) \hookrightarrow L^\infty(\Omega)$.





\begin{theorem}[Critical groups II]
	\label{Theorem 4.2}
Assume that $sp > N$, $h \in \B_p$ with $|\{x \in \Omega : h(x) >0\}| >0$, 
$g$ satisfies \eqref{ggrowth} with $q_i>p$ and $0$ is an isolated critical point of $\Phi$.
Then we have
\begin{enumerate}
\item $C^0(\Phi,0) \approx \mathbb{Z}_2$ and $C^q(\Phi,0) = 0$ for $q \ge 1$ if $\lambda = \lambda_1$ and, for some $\delta > 0$, $G(x,t) \le 0$ for a.a. $x \in \Omega$ and $|t| \le \delta$.
\item $C^k(\Phi,0) \ne 0$ in the following cases:
\begin{enumerate}
\item $\lambda = \lambda_k < \lambda_{k+1}$ and, for some $\delta > 0$, $G(x,t) \ge 0$ for a.a. $x \in \Omega$ and $|t| \le \delta$;
\item $\lambda_k < \lambda_{k+1} = \lambda$ and, for some $\delta > 0$, $G(x,t) \le 0$ for a.a. $x \in \Omega$ and $|t| \le \delta$.
\end{enumerate}
\end{enumerate}
\end{theorem}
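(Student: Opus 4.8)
The plan is to reduce the statement to Theorem~\ref{Theorem 4.1} by a localisation argument whose only new ingredient is the embedding $W^{s,p}_0(\Omega)\hookrightarrow L^\infty(\Omega)$, valid here because $sp>N$ (Section~2). Concretely, I would fix $C_\infty>0$ with $|u|_\infty\le C_\infty\|u\|$ for all $u\in W^{s,p}_0(\Omega)$ and set $\rho_0:=\delta/C_\infty$, so that $\|u\|\le\rho_0$ forces $|u(x)|\le\delta$ for a.a.\ $x\in\Omega$; hence the local sign hypothesis on $G$ becomes $\int_\Omega G(x,u)\,dx\le0$ (resp.\ $\ge0$) for every such $u$. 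Since $0$ is an isolated critical point of $\Phi$, the critical groups \eqref{criticalgroup} do not depend on the neighbourhood of $0$ used to compute them, so I would fix $U=B_{\rho_0}(0)$ once and for all. With this choice the proof of Theorem~\ref{Theorem 4.1} goes through essentially verbatim, the only change being that every point $u$ at which $G(x,u)$ is evaluated satisfies $|u|\le\delta$ pointwise.

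For part~(1), with $\lambda=\lambda_1$, I would combine the definition $\lambda_1=\inf_{u\in\mathcal{M}}\Psi(u)$ with the $p$-homogeneity of $\|\cdot\|^p$ and of $u\mapsto\int_\Omega h(x)|u|^p\,dx$ to obtain $\tfrac{\lambda_1}{p}\int_\Omega h(x)|u|^p\,dx\le\tfrac1p\|u\|^p$ for every $u\in W^{s,p}_0(\Omega)$. Then on $U$ one has $\Phi(u)\ge-\int_\Omega G(x,u)\,dx\ge0=\Phi(0)$, so $0$ is a local minimiser of $\Phi$; since it is an isolated critical point, any $u\in U\setminus\{0\}$ with $\Phi(u)=0$ would itself be a local minimiser, hence a critical point, which is impossible. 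Therefore $\Phi^0\cap U=\{0\}$ and \eqref{criticalgroup} gives $C^0(\Phi,0)\approx\mathbb{Z}_2$ and $C^q(\Phi,0)=0$ for $q\ge1$.

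For part~(2) I would reproduce, inside $U$, the cohomological local splitting of $\Phi$ near $0$ used for Theorem~\ref{Theorem 4.1}(2). Recall the eigenvalues $\lambda_k$ of \eqref{EQwolambda} with $q=p$ and the index identity \eqref{index}. For small $\rho>0$ put $S_\rho:=\{\|u\|=\rho\}$, let $A_\rho\subset S_\rho$ be the radial rescaling of $\{u\in\mathcal{M}:\Psi(u)\le\lambda_k\}$, and let $B_\rho$ be the complement in $S_\rho$ of the radial rescaling of $\{u\in\mathcal{M}:\Psi(u)<\lambda_{k+1}\}$; since $\lambda_k<\lambda_{k+1}$, \eqref{index} gives $i(A_\rho)=i(S_\rho\setminus B_\rho)=k$. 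On the truncated cone $\widehat{A_\rho}:=\{tu:u\in A_\rho,\ 0\le t\le1\}\subset U$ the principal part $\tfrac1p\|u\|^p-\tfrac{\lambda}{p}\int_\Omega h(x)|u|^p\,dx$ is $\le0$ because $\lambda\ge\lambda_k$, with a definite $\|u\|^p$-gap when $\lambda=\lambda_{k+1}>\lambda_k$; and the remainder $-\int_\Omega G(x,u)\,dx$ is $\le0$ in case (a) (since $G\ge0$ for $|t|\le\delta$ and $\widehat{A_\rho}\subset U$) and $o(\|u\|^p)$ in case (b) (by \eqref{ggrowth} with $q_i>p$ and Lemma~\ref{C}), so after shrinking $\rho$ one gets $\Phi\le0$ on $\widehat{A_\rho}$. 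Similarly, on $\widehat{B_\rho}$ the principal part is $\ge(1-\lambda/\lambda_{k+1})\tfrac1p\|u\|^p\ge0$ and the remainder is $o(\|u\|^p)$, so $\Phi\ge0$ on $\widehat{B_\rho}$, with strict inequality away from $0$ when $\lambda<\lambda_{k+1}$, while for $\lambda=\lambda_{k+1}$ one uses in addition that $-\int_\Omega G(x,u)\,dx\ge0$ on $U$ together with the isolatedness of $0$. I would then invoke the cohomological local splitting criterion of \cite{MR2640827} to conclude $C^k(\Phi,0)\ne0$.

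The step I expect to be the main obstacle is keeping the entire construction confined to $B_{\rho_0}(0)$ while still checking all the hypotheses of the cohomological local splitting lemma — in particular that the truncated cones $\widehat{A_\rho},\widehat{B_\rho}$ and the index bookkeeping from \eqref{index} behave well under the rescaling — and, within that, the borderline eigenvalue cases $\lambda\in\{\lambda_k,\lambda_{k+1}\}$, where the comparison between $\|u\|^p$ and $\int_\Omega h(x)|u|^p\,dx$ is only non-strict and the required (non-)strictness of $\Phi$ on one of the two cones must be recovered by combining the now only local sign of $G$, the strict $p$-superlinearity $q_i>p$, and the fact that $0$ is isolated. Everything else is a transcription of the proof of Theorem~\ref{Theorem 4.1}, the embedding $W^{s,p}_0(\Omega)\hookrightarrow L^\infty(\Omega)$ being the only genuinely new ingredient.
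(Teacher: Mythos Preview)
Your proposal is correct and follows exactly the approach the paper indicates. The paper does not give a separate proof of Theorem~\ref{Theorem 4.2}; it simply remarks, right before the statement, that ``when $sp>N$, it suffices to assume the sign conditions on $G$ in Theorem~\ref{Theorem 4.1} for small $|t|$ by the imbedding $W^{s,p}_0(\Omega)\hookrightarrow L^\infty(\Omega)$''. Your write-up is precisely an explicit unpacking of that sentence: restrict to a ball small enough that $|u|_\infty\le\delta$, so the local sign condition becomes global there, and then rerun the cohomological local splitting argument underlying Theorem~\ref{Theorem 4.1}. One cosmetic point: in part~(1) you need not argue directly that $\Phi^0\cap U=\{0\}$ (which requires shrinking $U$ so that any minimiser lies in the interior); it is cleaner to stop at ``$0$ is a local minimiser and an isolated critical point'' and quote the standard fact that such a point has critical groups $\delta_{q0}\,\mathbb{Z}_2$.
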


\noindent
As we will show next, the conclusions of Theorem \ref{Theorem 4.2} hold for $sp \le N$ when the weights $h$ and $K_i$
belong to suitable strengthened subclasses of $\B_p$ and $\B_{q_i}$, respectively.


\begin{definition}[Class of weights $\widetilde{\B}_q$]
	\label{def4-bis}
For $sp \le N$ and $q \in [1,p^\ast_s)$, we denote by $\widetilde{\B}_q$  the class of  functions
$K$ with $K \rho^{sa} \in L^r(\Omega)$ for some $a \in [0,q - 1]$
and $r \in (1,\infty)$ satisfying $$\frac{1}{r} + \frac{a}{p} + \frac{q-1-a}{p^\ast_s} < \frac{sp}{N}.$$
\end{definition}

\begin{remark}\rm
Note that $\widetilde{\B}_q = \B_q$ when $sp = N$ and $\widetilde{\B}_q \subset \B_q$
when $sp<N$ since $$\frac{1}{p^\ast_s} + \frac{sp}{N} < 1.$$ 
\end{remark}


\noindent
We have the following 
\begin{lemma} \label{Lemma 4.4}
Let $sp \le N$, $q \in [1,p^\ast_s)$ and $K \in \widetilde{\B}_q$. Then there exists $\tau > N/sp$ such that
\[
\left|K(x)\, |u|^{q-1}\right|_\tau \le C \|u\|^{q-1}
\]
for all $u \in  W_0^{s,p}(\Omega),$ for some constant $C > 0$.
\end{lemma}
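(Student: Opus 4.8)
The plan is to decompose $K\,|u|^{q-1}$ into three factors matched to the estimates at our disposal — the integrability of $K\rho^{sa}$ from Definition~\ref{def4-bis}, the Hardy inequality of Theorem~\ref{Hardy} (which in particular yields $|u/\rho^s|_p\le C\|u\|$), and the Sobolev embedding $W_0^{s,p}(\Omega)\hookrightarrow L^b(\Omega)$ — and then to choose the Hölder exponents so that the powers come out right and the resulting summability exponent $\tau$ exceeds $N/(sp)$.

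Concretely, fix $a\in[0,q-1]$ and $r\in(1,\infty)$ with $K\rho^{sa}\in L^r(\Omega)$ and $\frac1r+\frac ap+\frac{q-1-a}{p_s^\ast}<\frac{sp}{N}$, as provided by $K\in\widetilde{\B}_q$. First I would select a finite exponent $b$ with $1<b\le p_s^\ast$ and
\[
\frac1r+\frac ap+\frac{q-1-a}{b}<\frac{sp}{N};
\]
one may take $b=p_s^\ast$ when $sp<N$, while when $sp=N$ (so $p_s^\ast=\infty$ and the condition just reads $\frac1r+\frac ap<1$) one takes $b$ large but finite. Define $\tau$ by $\frac1\tau:=\frac1r+\frac ap+\frac{q-1-a}{b}$; then $\tau>N/(sp)\ge 1$, as required. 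Next, a.e.\ in $\Omega$ one has the pointwise identity $|K|\,|u|^{q-1}=|K\rho^{sa}|\cdot|u/\rho^s|^{a}\cdot|u|^{q-1-a}$, and I would apply the generalized Hölder inequality with the three exponents $\tfrac r\tau$, $\tfrac p{a\tau}$, $\tfrac b{(q-1-a)\tau}$ — whose reciprocals sum to $1$ by the choice of $\tau$, and each of which is $\ge1$ since $1/\tau$ dominates each of $1/r$, $a/p$, $(q-1-a)/b$ — to the functions $|K\rho^{sa}|^\tau$, $|u/\rho^s|^{a\tau}$, $|u|^{(q-1-a)\tau}$. This yields
\[
\int_\Omega|K|^\tau|u|^{(q-1)\tau}\,dx\le |K\rho^{sa}|_r^{\,\tau}\,\bigl|u/\rho^s\bigr|_p^{\,a\tau}\,|u|_b^{\,(q-1-a)\tau},
\]
and, taking $\tau$-th roots and then using Theorem~\ref{Hardy} and $W_0^{s,p}(\Omega)\hookrightarrow L^b(\Omega)$,
\[
\bigl|K\,|u|^{q-1}\bigr|_\tau\le |K\rho^{sa}|_r\,\bigl|u/\rho^s\bigr|_p^{\,a}\,|u|_b^{\,q-1-a}\le C\,\|u\|^{\,a}\,\|u\|^{\,q-1-a}=C\,\|u\|^{q-1}.
\]
If $a=0$ or $q-1-a=0$ the corresponding factor is simply dropped, and one uses only two Hölder exponents — or, in the extreme case $q=1$, none at all, since then $K\,|u|^{q-1}=K\in L^r(\Omega)$ and $\tau=r$ works.

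There is no genuinely hard step: the argument is essentially an exercise in bookkeeping with Hölder exponents. The one place where the hypothesis is actually used is that the defining inequality of $\widetilde{\B}_q$ is exactly what forces $\tau>N/(sp)$; the only mild nuisance is the borderline case $sp=N$, where $p_s^\ast=\infty$ makes it necessary to take $b$ finite rather than $b=p_s^\ast$.
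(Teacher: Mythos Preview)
Your proof is correct and follows essentially the same approach as the paper: decompose $|K|\,|u|^{q-1}$ as $|K\rho^{sa}|\cdot|u/\rho^s|^{a}\cdot|u|^{q-1-a}$, choose $\tau^{-1}=\tfrac1r+\tfrac ap+\tfrac{q-1-a}{b}$ for a suitable $b$ with continuous embedding into $L^b$, apply H\"older, then Hardy and Sobolev. The only cosmetic difference is that the paper always takes $b<p_s^\ast$ close to $p_s^\ast$ (which handles both cases $sp<N$ and $sp=N$ uniformly), whereas you split into cases and allow $b=p_s^\ast$ when $sp<N$; both choices work.
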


\begin{proof}
Let $0\leq a\leq q-1$ and $r>1$ be as in Definition \ref{def4-bis}. Then, there exists
$b < p^\ast_s$ sufficiently close to $p^\ast_s$ such that
\begin{equation*}
\frac{1}{r} + \frac{a}{p} + \frac{q-1-a}{b} < \frac{sp}{N}.
\end{equation*}
Then, choosing
$$
\tau:=\left(\frac{1}{r} + \frac{a}{p} + \frac{q-1-a}{b} \right)^{-1}>\frac{N}{sp},
$$
by the H\"older inequality, we obtain
$$
\int_\Omega |K(x)|^\tau\, |u|^{(q-1)\, \tau}\, dx = \int_\Omega |K \rho^{sa}|^\tau \left|\frac{u}{\rho^s}\right|^{a\tau} |u|^{(q-1-a)\, \tau}\, dx \le
\left|K \rho^{sa}\right|^\tau_r \left|\frac{u}{\rho^s}\right|^{a\tau}_p |u|^{(q-1-a)\, \tau}_b.
$$
In light of Theorem \ref{Hardy} and $|u|_b \le C \|u\|$, the conclusion follows.
\end{proof}

\noindent
Solutions of $(- \Delta)_p^s u = f(x)$ enjoy the useful $L^q$-estimate given next (cf.\ \cite[Lemma 2.3]{MRmpsy}).

\begin{lemma}[Summability lemma]
	\label{summab}
	Let $f \in L^q(\Omega)$ for some $1 < q \le \infty$ and assume that $u \in W^{s,p}_0(\Omega)$ is a
	weak solution of the equation $(- \Delta)_p^s\, u = f(x)$ in $\Omega$. Then
	\begin{equation} \label{12}
	\pnorm[r]{u} \le C \pnorm[q]{f}^{1/(p-1)},
	\end{equation}
	where
	\[
	r := \begin{cases}
	\dfrac{N\, (p - 1)\, q}{N - spq}, & 1 < q < \dfrac{N}{sp}, \\[10pt]
	\infty, & \dfrac{N}{sp} < q \le \infty,
	\end{cases}
	\]
	and $C = C(N,\Omega,p,s,q) > 0$.
\end{lemma}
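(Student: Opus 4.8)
The plan is to run a Moser-type iteration, exploiting the homogeneity of $(-\Delta)_p^s$ to normalize the datum. First I would write $u = \mu\, v$ with $\mu := |f|_q^{1/(p-1)}$; then $v$ is a weak solution of $(-\Delta)_p^s v = \mu^{1-p} f =: \widetilde f$ with $|\widetilde f|_q = 1$, and since $|u|_r = \mu\, |v|_r$ it suffices to prove $|v|_r \le C$ under the normalization $|f|_q = 1$. To start, I would test the equation with $u$ itself, obtaining $\|u\|^p = \int_\Omega f u\, dx \le |f|_q\, |u|_{q'} \le C |f|_q\, \|u\|$ by $W_0^{s,p}(\Omega) \hookrightarrow L^{q'}(\Omega)$ (legitimate in the pertinent range $q \ge (p^\ast_s)'$, which is precisely what makes $f$ a bounded functional on $W_0^{s,p}(\Omega)$), so that $|u|_{p^\ast_s} \le C$ serves as the base integrability.

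Next I would carry out the single iteration step. Fix $\beta \ge 1$, put $u_M := \operatorname{sgn}(u)\, \min\{|u|,M\}$ for $M > 0$, and test the weak formulation with $\varphi := u\, |u_M|^{\beta - 1} \in W_0^{s,p}(\Omega)$. Combining the pointwise inequality
\[
|a-b|^{p-2}(a-b)\bigl(a\,|a_M|^{\beta-1} - b\,|b_M|^{\beta-1}\bigr) \ \ge\ c(p)\,\beta^{1-p}\, \bigl|\, |a_M|^{\gamma-1} a_M - |b_M|^{\gamma-1} b_M \,\bigr|^p, \qquad \gamma := \frac{\beta + p - 1}{p},
\]
(a weighted analogue of \eqref{ineq.v}) with $W_0^{s,p}(\Omega) \hookrightarrow L^{p^\ast_s}(\Omega)$ and Fatou's lemma as $M \to \infty$, I would obtain
\[
|u|_{\gamma p^\ast_s}^{\,\beta + p - 1} \ \le\ C\, \beta^{p-1} \int_\Omega |f|\, |u|^\beta\, dx \ \le\ C\, \beta^{p-1} \Bigl(\int_\Omega |u|^{\beta q'}\, dx\Bigr)^{1/q'},
\]
the last inequality by Hölder and $|f|_q = 1$. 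Then I would set $t_0 := p^\ast_s$ and, inductively, $\beta_n := t_n/q'$ and $t_{n+1} := \gamma_n\, p^\ast_s = (\beta_n + p - 1)\, p^\ast_s/p$, so that $\beta_n q' = t_n$ and the estimate becomes
\[
|u|_{t_{n+1}} \ \le\ \bigl(C\, \beta_n^{p-1}\bigr)^{1/(\beta_n + p - 1)}\, |u|_{t_n}^{\,\theta_n}, \qquad \theta_n := \frac{\beta_n}{\beta_n + p - 1} \in (0,1)
\]
(for the finitely many indices with $\beta_n < 1$ the same inequality still holds). The recursion $t_{n+1} = \tfrac{(q-1)p^\ast_s}{pq}\, t_n + \tfrac{(p-1)p^\ast_s}{p}$ is affine with multiplier $\kappa := (q-1)p^\ast_s/(pq)$; a short computation gives $\kappa < 1 \iff q < N/(sp)$, and, when $\kappa < 1$, the fixed point of the recursion is exactly $\dfrac{N(p-1)q}{N-spq} = r$.

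Finally I would distinguish the two cases. If $1 < q < N/(sp)$, then $\kappa < 1$, so $t_n \to r$, $\beta_n \to r/q' < \infty$ and $\theta_n \to \theta_\ast < 1$; iterating the step estimate bounds $|u|_{t_n}$ by $|u|_{p^\ast_s}^{\prod_{j<n}\theta_j}$ times a product of constants whose logarithm is $\sum_{j<n} \ln\!\bigl(C\beta_j^{p-1}\bigr)(\beta_j + p - 1)^{-1} \prod_{j<i<n} \theta_i$, and since $\theta_j$ stays bounded away from $1$ for large $j$ both the exponent product and the constant series converge; hence $\sup_n |u|_{t_n} < \infty$ and, by Fatou's lemma (with $t_n \to r$ on the bounded set $\Omega$), $|u|_r \le C$. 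If $N/(sp) < q \le \infty$, then $\kappa > 1$, so $t_n$ and $\beta_n$ grow geometrically, whence $\sum_j (\beta_j + p - 1)^{-1} < \infty$; this forces $\prod_j \theta_j$ to converge to a positive limit and the constant series above to converge, so again $\sup_n |u|_{t_n} < \infty$, and letting $n \to \infty$ gives $|u|_\infty \le C$. Undoing the normalization yields \eqref{12}.

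The hard part will be the rigorous handling of the test functions and of the passage $M \to \infty$ — checking that the truncated quantities obey the displayed estimate uniformly in $M$, so that Fatou's lemma returns the untruncated bound — together with the bookkeeping that keeps the constant in \eqref{12} finite across infinitely many iterations; the two series above converge precisely because $q \ne N/(sp)$, which is why the borderline exponent is excluded from the statement. When $sp > N$, or when in the subcritical setting the iteration is started from the improved integrability furnished by Lemma~\ref{C}, several of these steps simplify considerably.
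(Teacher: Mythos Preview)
The paper does not actually prove this lemma: it is quoted verbatim from \cite[Lemma~2.3]{MRmpsy} with no argument given. Your Moser--De~Giorgi iteration is precisely the standard route to such summability estimates for $p$-Laplacian type operators, and the outline you give is correct; in particular, the affine recursion for $t_n$ with multiplier $\kappa=(q-1)p^\ast_s/(pq)$ and the identification of its fixed point with $r=N(p-1)q/(N-spq)$ are right, and your bookkeeping for the two regimes $\kappa\lessgtr 1$ is the usual one.

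Two small remarks. First, the parenthetical about ``finitely many indices with $\beta_n<1$'' is unnecessary: for $f$ to act on $W^{s,p}_0(\Omega)$ one needs $q\ge (p^\ast_s)'$, which gives $q'\le p^\ast_s$ and hence $\beta_0=p^\ast_s/q'\ge 1$; since $t_n$ is increasing toward $r\ge p^\ast_s$ in this range, all $\beta_n\ge 1$. Second, the passage from $\sup_n|u|_{t_n}\le C$ with $t_n\nearrow r$ to $|u|_r\le C$ is indeed a one-line Fatou argument on the bounded domain, as you indicate. The pointwise algebraic inequality you invoke for the truncated test function is the standard one used in fractional Moser iteration (see e.g.\ \cite{BP,Franzina}); its proof is elementary but should be stated carefully in a fully detailed write-up.
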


\noindent
Assume that $sp \le N$, $h \in \widetilde{\B}_p$ and $K_i \in  \widetilde{\B}_{q_i}$.
First we show that the critical groups of $\Phi$ at zero depend only on the values of $g(x,t)$ for small $|t|$.

\begin{lemma} \label{Lemma 4.5}
Let $\delta > 0$ and let $\vartheta : \mathbb{R} \to [- \delta,\delta]$ be a smooth nondecreasing function such that $\vartheta(t) = - \delta$ for $t \le - \delta$, $\vartheta(t) = t$ for $- \delta/2 \le t \le \delta/2$ and $\vartheta(t) = \delta$ for $t \ge \delta$. Set
\[
\Phi_1(u) := \frac{1}{p} \|u\|^p - \frac{\lambda}{p}\int_\Omega h(x) |u|^pdx -\int_{\Omega} G(x,\vartheta(u))dx, \quad u \in W_0^{s,p}(\Omega).
\]
If $0$ is an isolated critical point of $\Phi$, then it is also an isolated critical point of $\Phi_1$ and
$$
C^q(\Phi,0) \approx C^q(\Phi_1,0)\quad \forall q.
$$
\end{lemma}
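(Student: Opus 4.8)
The plan is to show that $\Phi$ and $\Phi_1$ coincide on a small ball around $0$, so that their critical groups at $0$ are literally the same object, and in particular $0$ is isolated for one precisely when it is isolated for the other. The key observation is that $\vartheta(t) = t$ for $|t| \le \delta/2$, so if $\|u\|_{L^\infty(\Omega)} \le \delta/2$ then $\vartheta(u(x)) = u(x)$ for a.a.\ $x \in \Omega$, whence $G(x,\vartheta(u)) = G(x,u)$ a.e.\ and therefore $\Phi_1(u) = \Phi(u)$. The obstacle is that $W_0^{s,p}(\Omega)$ need not embed into $L^\infty$ when $sp \le N$, so smallness of $\|u\|$ does \emph{not} force $\|u\|_\infty$ small, and we cannot directly compare the two functionals on a norm-ball. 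This is exactly where the strengthened classes $\widetilde{\B}_p$ and $\widetilde{\B}_{q_i}$, together with the summability Lemma~\ref{summab}, enter.

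The argument runs as follows. First I would establish a \emph{uniform} $L^\infty$-bound for critical points of $\Phi$ (and of $\Phi_1$) lying in a small norm-ball. Let $u$ be such a critical point; then $(-\Delta)_p^s u = \lambda h(x)|u|^{p-2}u + g(x,u)$ (resp.\ with $g(x,\vartheta(u))$ and the extra factor $\vartheta'(u)$, which is bounded) in the weak sense, so $u$ solves $(-\Delta)_p^s u = f(x)$ with $f(x) := \lambda h(x)|u|^{p-2}u + g(x,u)$. By \eqref{ggrowth} and the definitions of the classes, $|f(x)| \le |\lambda|\, h(x)|u|^{p-1} + \sum_i K_i(x)|u|^{q_i-1}$, and Lemma~\ref{Lemma 4.4} applied to each term (with $q = p$ for the $h$-term, $q = q_i$ for the $K_i$-terms) gives $|f|_\tau \le C(\|u\|^{p-1} + \sum_i \|u\|^{q_i-1})$ for some $\tau > N/(sp)$. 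Then the second branch of Lemma~\ref{summab} ($q = \tau > N/(sp)$) yields
\[
\|u\|_{L^\infty(\Omega)} \le C\, |f|_\tau^{1/(p-1)} \le C\Big(\|u\| + \textstyle\sum_i \|u\|^{(q_i-1)/(p-1)}\Big).
\]
Since $q_i > p$, the right-hand side tends to $0$ as $\|u\| \to 0$; hence there is $\epsilon > 0$ such that every critical point $u$ of $\Phi$ with $\|u\| \le \epsilon$ satisfies $\|u\|_\infty \le \delta/2$, and the same for $\Phi_1$ (the bound $|\vartheta(t)| \le \delta$, $|\vartheta'| \le 1$ only helps).

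Second, I would conclude. On the norm-ball $B_\epsilon := \{u : \|u\| \le \epsilon\}$ one has a priori \emph{not} that every $u$ has small sup-norm, so $\Phi$ and $\Phi_1$ need not agree on all of $B_\epsilon$; instead I argue at the level of critical points. By the previous step, the critical points of $\Phi$ in $B_\epsilon$ are exactly the critical points of $\Phi_1$ in $B_\epsilon$ (each such $u$ has $\|u\|_\infty \le \delta/2$, so $\vartheta(u) = u$, so $\Phi_1$ and $\Phi$ have the same derivative at $u$; conversely a critical point of $\Phi_1$ in $B_\epsilon$ has $\|u\|_\infty \le \delta/2$ by the analogous bound, hence is a critical point of $\Phi$). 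In particular $0$, being isolated for $\Phi$, is isolated for $\Phi_1$. For the equality of critical groups one notes that both functionals are of the form ``$C^1$ functional with $A$ of class $(\mathcal S)$ plus a compact perturbation'', so the critical groups are computed by the usual Gromoll--Meyer / deformation machinery; since $\Phi = \Phi_1$ near $0$ is too strong to expect, the cleaner route is: the homotopy $\Phi_t := \tfrac1p\|u\|^p - \tfrac{\lambda}{p}\int_\Omega h|u|^p - \int_\Omega [tG(x,u) + (1-t)G(x,\vartheta(u))]\,dx$ has, for every $t \in [0,1]$, the same critical points in $B_\epsilon$ as $\Phi$ (by the uniform sup-bound, which is insensitive to replacing $u$ by $\vartheta(u)$ inside $G$ when $\|u\|_\infty \le \delta/2$), so $0$ stays an isolated critical point along the homotopy, and by the homotopy invariance of critical groups under such admissible deformations (as in \cite[Chapter 4]{MR2640827}) we get $C^q(\Phi,0) \approx C^q(\Phi_1,0)$ for all $q$. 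The main obstacle, as flagged, is the $L^\infty$-bound; once Lemma~\ref{Lemma 4.4} and Lemma~\ref{summab} deliver it uniformly for small-norm critical points, everything else is the standard invariance of critical groups.
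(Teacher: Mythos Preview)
Your proposal is correct and follows essentially the paper's approach: invoke homotopy invariance of critical groups, and verify that $0$ is the only critical point of every intermediate functional in a small ball by showing (via Lemma~\ref{Lemma 4.4} and the summability Lemma~\ref{summab}) that any small-norm critical point has $\|u\|_\infty\le\delta/2$, hence satisfies $\vartheta(u)=u$ and is therefore a critical point of $\Phi$. The differences are cosmetic---the paper interpolates inside the argument of $G$, writing $G(x,(1-\tau)u+\tau\vartheta(u))$, and phrases the $L^\infty$ step as a contradiction argument, while you interpolate linearly between $G(x,u)$ and $G(x,\vartheta(u))$ and bound directly; also, you only need $|\vartheta'|$ bounded (not $\le 1$) and $|\vartheta(t)|\le C|t|$, both automatic here.
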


\begin{proof}
By applying \cite[Proposition 4.1]{PS} to the family of functionals
\[
\Phi_\tau(u) := \frac{1}{p} \|u\|^p - \frac{\lambda}{p}\int_\Omega h(x) |u|^pdx-\int_{\Omega}G(x,(1 - \tau)\, u + \tau\, \vartheta(u)) dx,
\quad u \in W_0^{s,p}(\Omega),\, \tau \in [0,1]
\]
in a small ball $B_\varepsilon(0) = \left\{u \in W_0^{s,p}(\Omega): \|u\| \le \varepsilon \right\}$,
observing that each $\Phi_\tau$ satisfies the Palais-Smale condition over $B_\varepsilon(0)$ in light of Lemma \ref{PS1.5} and
that the map $[0,1]\ni\tau \to \Phi_\tau\in C^1(B_\varepsilon(0),\mathbb{R})$
is continuous, we need to show that for sufficiently small $\varepsilon$, $B_\varepsilon(0)$ contains no critical point of any $\Phi_\tau$ other than $0$.
Suppose $u_j \to 0$ in $W_0^{s,p}(\Omega)$, $\Phi_{\tau_j}'(u_j) = 0,\, \tau_j \in [0,1]$ and $u_j \ne 0$. Then
\begin{equation*} \label{perturb2}
\begin{cases}
 (-\Delta)_p^su_j= \lambda h(x)|u_j|^{p-2}u_j + g_j(x,u_j) \quad &\text{in } \Omega,\\
  u_j=0\quad &\text{in } \mathbb{R}^N\setminus \Omega,
  \end{cases}
\end{equation*}
where
\[
g_j(x,t):= (1 - \tau_j + \tau_j\, \vartheta'(t))\, g(x,(1 - \tau_j)\, t + \tau_j\, \vartheta(t)).
\]
Following the proof of \cite[Lemma 4.8]{PS} where we use Lemma \ref{Lemma 4.4} several times, we obtain that
$u_j \in L^\infty(\Omega)$ and $u_j \to 0$ in $L^\infty(\Omega)$ by means of Lemma~\ref{summab}.\
Then, for sufficiently large $j\in {\mathbb N}$, $|u_j(x)| \le \delta/2$ for a.a.\ $x\in\Omega$ and
 hence $\Phi'(u_j) = \Phi_{\tau_j}'(u_j) = 0$, contradicting the assumption that $0$ was an isolated critical point of $\Phi$.
\end{proof}

\noindent
Lemma \ref{Lemma 4.5} and Theorem \ref{Theorem 4.1} immediately give

\begin{theorem}[Critical groups III]
	\label{Theorem 4.6}
Assume that $sp \le N$, $h \in \widetilde{\B}_p$ with $|\{x \in \Omega : h(x) >0\}| >0$, $g$ satisfies
condition \eqref{ggrowth} with $q_i\in(p,p^\ast_s)$ and $K_i \in \widetilde{\B}_{q_i}$,
and $0$ is an isolated critical point of $\Phi$. Then we have
\begin{enumerate}
\item $C^0(\Phi,0) \approx \mathbb{Z}_2$ and $C^q(\Phi,0) = 0$ for $q \ge 1$ if $\lambda = \lambda_1$ and, for some $\delta > 0$, $G(x,t) \le 0$ for a.a. $x \in \Omega$ and $|t| \le \delta$.
\item $C^k(\Phi,0) \ne 0$ in the following cases:
\begin{enumerate}
\item $\lambda = \lambda_k < \lambda_{k+1}$ and, for some $\delta > 0$, $G(x,t) \ge 0$ for a.a. $x \in \Omega$ and $|t| \le \delta$;
\item $\lambda_k < \lambda_{k+1} = \lambda$ and, for some $\delta > 0$, $G(x,t) \le 0$ for a.a. $x \in \Omega$ and $|t| \le \delta$.
\end{enumerate}
\end{enumerate}
\end{theorem}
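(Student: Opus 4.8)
The plan is to reduce the statement to Theorem~\ref{Theorem 4.1} via Lemma~\ref{Lemma 4.5}, exploiting the fact that the truncation $\vartheta$ converts the \emph{near-zero} sign hypotheses on $G$ into \emph{global} sign hypotheses for the truncated functional.

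First I would fix $\delta > 0$ as in the sign conditions of the statement and apply Lemma~\ref{Lemma 4.5}: its standing assumptions ($sp \le N$, $h \in \widetilde{\B}_p$, $K_i \in \widetilde{\B}_{q_i}$, $q_i \in (p,p^\ast_s)$, and $0$ an isolated critical point of $\Phi$) are exactly those in force here, so we conclude that $0$ is an isolated critical point of
\[
\Phi_1(u) = \frac{1}{p}\|u\|^p - \frac{\lambda}{p}\int_\Omega h(x)|u|^p\,dx - \int_\Omega G(x,\vartheta(u))\,dx
\]
and that $C^q(\Phi,0) \approx C^q(\Phi_1,0)$ for all $q$. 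It therefore suffices to compute the critical groups of $\Phi_1$ at $0$, and for that I would verify that $\Phi_1$ falls within the scope of Theorem~\ref{Theorem 4.1}. Writing $G_1(x,t) := G(x,\vartheta(t)) = \int_0^t g_1(x,\tau)\,d\tau$ with $g_1(x,t) = \vartheta'(t)\, g(x,\vartheta(t))$, the function $g_1$ is Carath\'eodory and, since $\vartheta'$ is bounded and $|\vartheta(t)| \le C\min\{|t|,\delta\}$, it still obeys \eqref{ggrowth} with the same exponents $q_i \in (p,p^\ast_s)$ and (up to constants) the same weights $K_i \in \widetilde{\B}_{q_i} \subset \B_{q_i}$; moreover $h \in \widetilde{\B}_p \subset \B_p$ with $|\{x \in \Omega : h(x) > 0\}| > 0$.

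The decisive point is that $\vartheta$ is valued in $[-\delta,\delta]$: hence if $G(x,t) \le 0$ (respectively $\ge 0$) for a.a.\ $x \in \Omega$ and all $|t| \le \delta$, then $G_1(x,t) = G(x,\vartheta(t)) \le 0$ (respectively $\ge 0$) for a.a.\ $x \in \Omega$ and \emph{all} $t \in \mathbb{R}$. So Theorem~\ref{Theorem 4.1} applies to $\Phi_1$: case~(1)(b) there gives $C^0(\Phi_1,0) \approx \mathbb{Z}_2$ and $C^q(\Phi_1,0) = 0$ for $q \ge 1$ when $\lambda = \lambda_1$ and $G \le 0$ near $0$; case~(2)(b) gives $C^k(\Phi_1,0) \ne 0$ when $\lambda = \lambda_k < \lambda_{k+1}$ and $G \ge 0$ near $0$; and case~(2)(c) gives $C^k(\Phi_1,0) \ne 0$ when $\lambda_k < \lambda_{k+1} = \lambda$ and $G \le 0$ near $0$. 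Transporting these back through the isomorphism of Lemma~\ref{Lemma 4.5} yields parts~(1) and~(2) of the present statement. I do not anticipate a genuine obstacle; the only things requiring (routine) care are checking that $g_1$ keeps the growth \eqref{ggrowth}, so that $\Phi_1 \in C^1$ and the Palais--Smale-type arguments behind both Lemma~\ref{Lemma 4.5} and Theorem~\ref{Theorem 4.1} go through, and --- conceptually --- noticing that the truncation is precisely what upgrades the local sign condition to the global one demanded by Theorem~\ref{Theorem 4.1}.
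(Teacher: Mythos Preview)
Your proposal is correct and follows exactly the approach the paper indicates: the paper's proof is the single sentence ``Lemma~\ref{Lemma 4.5} and Theorem~\ref{Theorem 4.1} immediately give'' Theorem~\ref{Theorem 4.6}, and you have spelled out precisely how---namely, that the truncation $\vartheta$ turns the local sign condition on $G$ into a global one for $G_1(x,t)=G(x,\vartheta(t))$, after which Theorem~\ref{Theorem 4.1} applies to $\Phi_1$ and the isomorphism of Lemma~\ref{Lemma 4.5} transports the conclusion back to $\Phi$.
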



\section{Nontrivial solutions}

\noindent
We now investigate the existence of nontrivial solutions of the problem
\begin{equation} \label{nontrivial}
\begin{cases}
 (-\Delta)_p^su= \lambda h(x)|u|^{p-2}u +K(x)|u|^{q-2}u+ g(x,u) \quad &\text{in } \Omega,\\
  u=0\quad &\text{in } \mathbb{R}^N\setminus \Omega,
  \end{cases}
\end{equation}
where $q \in (p,p^\ast_s)$, $K \in \B_q$ satisfies
\begin{equation} \label{5.2}
\inf_{x \in \Omega}\, K(x) > 0,
\end{equation}
and $g$ satisfies \eqref{ggrowth} with each $q_i \in (p,q)$. Once again, we will first find suitable subclasses of $\B_p$ and 
$\B_{q_i}$ for the weights $h$ and $K_i$, respectively, to ensure the Palais-Smale condition.


\begin{definition}[Class of weights ${\B}_t^q$]
	 \label{Definition 5.1}
	For $q \in (1,p^\ast_s)$ and $t \in [1,q)$, let $\B_t^q$ denote the class of
functions $K$ such that $K \rho^{sa} \in L^r(\Omega)$ for some $a \in [0,t - 1]$ and $r \in (1,\infty)$ satisfying
\begin{equation} \label{Bt}
\frac{1}{r} + \frac{a}{p} + \frac{t-a}{q} \le 1.
\end{equation}
\end{definition}

\noindent
Clearly,  $\B_t^q \subset \B_t.$


\begin{lemma} \label{Lemma 5.2}
Let $q \in [1,p^\ast_s)$, $t \in [1,q)$ and $K \in \B_t^q$. Then there exist
$0\le m < p$ and, for any $\varepsilon > 0$, a constant $C(\varepsilon) > 0$ such that
\[
\int_\Omega |K(x)|\, |u|^t\, dx \le C(\varepsilon) \|u\|^m + \varepsilon |u|^q_q
\,\,\,\quad \text{for every $u \in W_0^{s,p}(\Omega)$}.
\]
\end{lemma}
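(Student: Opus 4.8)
The plan is to use Hölder's inequality to split the weighted integral $\int_\Omega |K|\,|u|^t\,dx$ into three factors, one controlled by the hypothesis $K\rho^{sa}\in L^r(\Omega)$, one by the fractional Hardy inequality (Theorem~\ref{Hardy}), and one by the $L^q$-norm of $u$, and then to convert the resulting bound involving $\|u\|$ and $|u|_q$ into the desired form by Young's inequality. First I would record the exponent relation from \eqref{Bt}. Writing $|K|\,|u|^t = |K\rho^{sa}|\,\bigl|u/\rho^s\bigr|^a\,|u|^{t-a}$ and applying Hölder with the three exponents $r$, $p/a$ and $q/(t-a)$ (permissible precisely because $\tfrac1r+\tfrac ap+\tfrac{t-a}{q}\le 1$, after inserting a fourth factor with exponent $\ge 1$ equal to $|\Omega|^{\theta}$ if the sum is strictly less than $1$), I obtain
\[
\int_\Omega |K(x)|\,|u|^t\,dx \le C\,\bigl|K\rho^{sa}\bigr|_r\,\Bigl|\tfrac{u}{\rho^s}\Bigr|_p^{a}\,|u|_q^{\,t-a}.
\]
By Theorem~\ref{Hardy} together with the Poincaré inequality, $\bigl|u/\rho^s\bigr|_p \le C\|u\|$, so the right-hand side is bounded by $C\,\|u\|^{a}\,|u|_q^{\,t-a}$.

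Next I would apply Young's inequality to the product $\|u\|^{a}\,|u|_q^{\,t-a}$. Since $t<q$ we have $t-a<q$; choosing conjugate exponents $\dfrac{q}{q-(t-a)}$ and $\dfrac{q}{t-a}$ gives, for any $\varepsilon>0$,
\[
C\,\|u\|^{a}\,|u|_q^{\,t-a} \le C(\varepsilon)\,\|u\|^{m} + \varepsilon\,|u|_q^{\,q},
\qquad m := \frac{a\,q}{q-(t-a)} = \frac{aq}{q-t+a}.
\]
It then remains to check that $0\le m<p$. Nonnegativity is clear. For the strict bound, note $m<p$ is equivalent to $aq < p\,(q-t+a)$, i.e. $a(q-p) < p(q-t)$; this follows from \eqref{Bt}, which forces $a \le t-1 < t$ and also, via $\tfrac1r>0$, gives $\tfrac ap + \tfrac{t-a}{q} < 1$, hence $aq + p(t-a) < pq$, i.e. $a(q-p) < p(q-t)$, exactly what is needed. (When $a=0$ one simply gets $m=0<p$, and the Hardy inequality is not even invoked.)

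The only subtlety, and the step I would be most careful about, is the borderline case where equality holds in \eqref{Bt}: there the three Hölder exponents are genuinely conjugate and no spare factor of $|\Omega|^\theta$ is available, but the argument goes through unchanged since the identity $\tfrac1r+\tfrac ap+\tfrac{t-a}{q}=1$ is precisely the Hölder admissibility condition; when the inequality in \eqref{Bt} is strict one inserts the constant factor on a set of finite measure. The rest is routine. I would also remark that if $a=t-1$ and $\tfrac1r+\tfrac ap+\tfrac1q=1$, the exponent $m$ attains its largest value $\tfrac{(t-1)q}{q-1}$, which is still $<p$ by the computation above, so the conclusion is uniform over all admissible choices of $a$ and $r$.
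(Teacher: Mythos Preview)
Your proof is correct and follows essentially the same route as the paper. The only cosmetic difference is that the paper absorbs the possible slack in \eqref{Bt} by introducing an intermediate exponent $b$ with $\tfrac1r+\tfrac ap+\tfrac{t-a}{b}=1$ and then using $|u|_b\le C|u|_q$ (since $b\le q$ on a bounded domain), whereas you apply H\"older directly with exponents $r,\,p/a,\,q/(t-a)$ and carry the slack as a factor of $|\Omega|^\theta$; the resulting bound $C\|u\|^{a}|u|_q^{t-a}$ and the subsequent Young step (yielding $m=aq/(q-t+a)$, equivalently $a/m+(t-a)/q=1$) are identical, and your verification that $m<p$ via $1/r>0$ is exactly the argument the paper leaves implicit.
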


\begin{proof}
Denoting $a$ and $r$ as in Definition \ref{Definition 5.1}, by the H\"older inequality, we have
\[
\int_\Omega |K(x)|\, |u|^t\, dx = \int_\Omega |K \rho^{sa}| \left|\frac{u}{\rho^s}\right|^a |u|^{t-a}\, dx \le \left|K \rho^{sa}\right|_r \left|\frac{u}{\rho^s}\right|^a_p \left|u\right|^{t-a}_b,
\]
where $1/r + a/p + (t-a)/b = 1$ and hence $b \le q$ by \eqref{Bt}. 
The last expression is less than or equal to $C \|u\|^{a} |u|^{t - a}_q$ due to $|u|_b \le C |u|_q$ 
and Theorem \ref{Hardy}.
The Young inequality implies that the latter is less than or equal to $C(\varepsilon) \|u\|^{m} + \varepsilon |u|^q_q$, where $a/m + (t-a)/q = 1$ when $a>0$ and $m=0$ when $a=0$. Hence $0\leq m < p$ by means of \eqref{Bt}.
\end{proof}

\noindent
Assuming that $h \in \B_p^q$ and $K_i \in \B_{q_i}^q$,
modifying the argument in the proof of \cite[Lemma 5.3]{PS}, the associated functional
\[
\Phi(u) := \frac{1}{p} \|u\|^p -\frac{\lambda}{p}\int_\Omega h(x)|u|^pdx - \frac{1}{q}\int_\Omega K(x)|u|^qdx-\int_{\Omega} G(x,u)dx,
\ \quad u \in W_0^{s,p}(\Omega),
\]
where $G(x,t):= \int_0^t g(x,\tau)\, d\tau$, satisfies the Palais-Smale condition.

\begin{lemma} \label{Lemma 5.3}
Any sequence $\{u_n\}_{n\in {\mathbb N}} \subset W_0^{s,p}(\Omega)$ such that $\{\Phi(u_n)\}_{n\in{\mathbb N}}$ is
bounded and $\Phi'(u_n) \to 0$ admits a convergent subsequence in $W_0^{s,p}(\Omega)$.
\end{lemma}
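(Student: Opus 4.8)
\textbf{Proof proposal for Lemma \ref{Lemma 5.3}.}
The plan is to argue in the by now classical way for $p$-superlinear problems, splitting the argument into the two standard stages: first establish that any Palais--Smale sequence is bounded in $W_0^{s,p}(\Omega)$, and then invoke the compactness of the operator $A$ (property $({\mathcal S})$) together with the compact embeddings to extract a convergent subsequence. Throughout I would use the hypothesis $h \in \B_p^q$, $K_i \in \B_{q_i}^q$ with each $q_i \in (p,q)$, so that Lemma \ref{Lemma 5.2} applies to each of these weights, and $K \in \B_q$ with $\inf_\Omega K > 0$, which supplies a genuine $|u|_q^q$ term with a good sign in the functional.

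\emph{Step 1: boundedness.} Let $\{u_n\}$ be a Palais--Smale sequence, so $|\Phi(u_n)| \le M$ and $\|\Phi'(u_n)\|_{W_0^{-s,p'}(\Omega)} \to 0$; in particular $|\langle \Phi'(u_n), u_n\rangle| \le \|u_n\| + o(\|u_n\|)$. Compute the combination $\Phi(u_n) - \frac{1}{q}\langle \Phi'(u_n), u_n\rangle$. In this combination the terms coming from $K$ cancel exactly, the $\|u_n\|^p$-term appears with the positive coefficient $\left(\frac{1}{p} - \frac{1}{q}\right)$ since $q > p$, the $\lambda h$-term contributes $\lambda\left(\frac{1}{q} - \frac{1}{p}\right)\int_\Omega h(x)|u_n|^p\,dx$ which by Lemma \ref{Lemma 5.2} (applied with $t = p$, whence the exponent $m < p$) is bounded by $C(\varepsilon)\|u_n\|^m + \varepsilon |u_n|_q^q$, and the $g$-terms give $\int_\Omega \left(\frac{1}{q} g(x,u_n)u_n - G(x,u_n)\right)dx$, which by \eqref{ggrowth} is controlled by $\sum_i C \int_\Omega K_i(x)|u_n|^{q_i}\,dx$, and another application of Lemma \ref{Lemma 5.2} (with $t = q_i < q$, exponent $m_i < p$) bounds this by $\sum_i\left(C(\varepsilon)\|u_n\|^{m_i} + \varepsilon |u_n|_q^q\right)$. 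Choosing $\varepsilon$ small relative to the available $|u_n|_q^q$-budget --- here one needs to track carefully that the $K$-term one sacrifices a piece of, or rather that the $\varepsilon |u_n|_q^q$ terms are absorbable: since no $+C|u_n|_q^q$ survives on the good side, one instead absorbs the $\varepsilon|u_n|_q^q$ directly against $\frac{1}{q}\int_\Omega K(x)|u_n|^q dx$ reconstructed from $\langle\Phi'(u_n),u_n\rangle$ --- one arrives at an inequality of the form $\left(\tfrac1p - \tfrac1q\right)\|u_n\|^p \le M + C\|u_n\| + C\sum_i \|u_n\|^{m_i} + C\|u_n\|^m + c\,|u_n|_q^q$ with $c$ small, and since the $|u_n|_q^q$ term can be reabsorbed and all the exponents $m, m_i < p$, boundedness of $\{u_n\}$ follows. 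I expect this bookkeeping --- deciding precisely which $\varepsilon|u|_q^q$ gets absorbed into which genuine $q$-power term --- to be the main technical obstacle and the place where the specific structure $K \in \B_q$ with $\inf K > 0$ is essential.

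\emph{Step 2: compactness.} Once $\{u_n\}$ is bounded, reflexivity of $W_0^{s,p}(\Omega)$ gives $u_n \rightharpoonup u$ (along a subsequence) and the compact embeddings $W_0^{s,p}(\Omega) \hookrightarrow\hookrightarrow L^{b}(\Omega)$ for the relevant exponents $b < p_s^\ast$ give $u_n \to u$ strongly in those $L^b$ spaces. Exactly as in the proofs of Lemma \ref{PS1.5} and Theorem \ref{positive}, Lemma \ref{C} then shows that $\int_\Omega \left(\lambda h(x)|u_n|^{p-2}u_n + K(x)|u_n|^{q-2}u_n + g(x,u_n)\right)(u_n - u)\,dx \to 0$, because each integrand factors through an $L^r$ weight against powers of $u_n$ in compactly-embedded Lebesgue spaces times $|u_n - u|_b \to 0$. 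Writing $\langle A(u_n), u_n - u\rangle = \langle \Phi'(u_n), u_n - u\rangle + \int_\Omega(\cdots)(u_n - u)\,dx$ and noting both terms on the right tend to $0$ (the first since $\Phi'(u_n) \to 0$ and $\{u_n - u\}$ is bounded), we get $\langle A(u_n), u_n - u\rangle \to 0$, and property $({\mathcal S})$ yields $u_n \to u$ strongly in $W_0^{s,p}(\Omega)$. This completes the proof.
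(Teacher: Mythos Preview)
Your proposal is correct and follows precisely the approach the paper indicates (it refers to \cite[Lemma 5.3]{PS} and provides no further details): bound the sequence via the combination $\Phi(u_n)-\frac{1}{q}\langle\Phi'(u_n),u_n\rangle$, use Lemma~\ref{Lemma 5.2} to control the $h$- and $K_i$-terms by $C(\varepsilon)\|u_n\|^{m}+\varepsilon|u_n|_q^q$ with $m<p$, recover $|u_n|_q^q\le C\|u_n\|^p+\text{(lower order)}$ from $\langle\Phi'(u_n),u_n\rangle$ together with $\inf_\Omega K>0$ to absorb the $\varepsilon|u_n|_q^q$ leftovers, and then conclude strong convergence exactly as in Lemma~\ref{PS1.5} via property $(\mathcal S)$. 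The only cosmetic slip is the bound $|\langle\Phi'(u_n),u_n\rangle|\le\|u_n\|+o(\|u_n\|)$, which should simply read $o(\|u_n\|)$; this does not affect the argument.
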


\noindent
Concerning the structure of the sublevel sets
$\Phi^\alpha := \left\{u \in W_0^{s,p}(\Omega) : \Phi(u) \le \alpha \right\}$
for $\alpha < 0$ with $|\alpha|$ large, one can follow the proofs of \cite[Lemma 5.4 and 5.5]{PS} and get
%



\begin{lemma} \label{Lemma 5.5}
There exists $\alpha < 0$ such that $\Phi^\alpha$ is contractible in itself.
\end{lemma}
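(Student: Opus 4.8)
The plan is to exhibit a finite-dimensional subspace on which $\Phi$ goes uniformly to $-\infty$, then use that subspace together with a radial deformation to contract a deep sublevel set. First I would recall from the discussion above that $\lambda \ge 0$, that $h \in \B_p^q$ and $K_i \in \B_{q_i}^q$, and crucially that $K \in \B_q$ satisfies the positivity condition \eqref{5.2}, i.e.\ $\inf_{\Omega} K > 0$. Fix any $u \in W_0^{s,p}(\Omega) \setminus \{0\}$ and examine $\Phi(t u)$ for $t > 0$. The leading term of $-\tfrac{1}{q}\int_\Omega K(x)|tu|^q\,dx$ is $-\,c\, t^q |u|_q^q$ with $c = \tfrac{1}{q}\inf_\Omega K > 0$, which is of order $t^q$ with $q > p$. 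The term $\tfrac{1}{p}\|tu\|^p = \tfrac{t^p}{p}\|u\|^p$ is of order $t^p$; the term $-\tfrac{\lambda}{p}\int_\Omega h|tu|^p$ is $O(t^p)$ by Lemma~\ref{C} (it only helps if $\lambda\ge 0$ and $\int h|u|^p>0$, but in any case it is controlled); and $-\int_\Omega G(x,tu)\,dx$ is $O(t^{q_i})$ summed over $i$, hence $O(t^{\max_i q_i})$ with each $q_i \in (p,q)$, so it is $o(t^q)$. Therefore $\Phi(tu) \to -\infty$ as $t \to +\infty$, and in fact $\Phi(tu) \le -\tfrac{c}{2} t^q |u|_q^q$ for $t$ large.

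The second step is to upgrade this to uniform behavior on the unit sphere $S := \{u : \|u\| = 1\}$. For $u \in S$ write
\[
\Phi(tu) \le \frac{t^p}{p} - \frac{c\, t^q}{q}\,|u|_q^q + C\,\lambda\, t^p + C \sum_{i=1}^n t^{q_i},
\]
using Lemma~\ref{C} for the $h$-term and $g$-term. The only quantity depending on $u$ on the right is $|u|_q^q$; but since $q$ may fail to be bounded below on $S$ (the embedding $W_0^{s,p}\hookrightarrow L^q$ is not an isomorphism), I would instead argue as in \cite[Lemma 5.4]{PS}: pick a finite-dimensional subspace $V \subset W_0^{s,p}(\Omega)$ of dimension $\ge 2$, on which all norms are equivalent, so that $|u|_q \ge c_V \|u\|$ for $u \in V$. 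On $V$ one then gets $\Phi(u) \le \tfrac{1}{p}\|u\|^p + C\lambda\|u\|^p + C\sum_i \|u\|^{q_i} - \tfrac{c\, c_V^q}{q}\|u\|^q \to -\infty$ uniformly as $\|u\|\to\infty$ within $V$, because $q > \max(p, q_i)$. Choose $R$ so large that $\Phi(u) \le \alpha_0 < 0$ for all $u \in V$ with $\|u\| \ge R$, and set $\alpha := \alpha_0$.

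For the contraction, I would use the standard negative-gradient-flow argument of \cite[Lemma 5.5]{PS}, combined with the radial structure on $V$. Concretely: by Lemma~\ref{Lemma 5.3}, $\Phi$ satisfies Palais--Smale; using this and a deformation lemma one shows $\Phi^\alpha$ deformation-retracts onto $\Phi^\alpha \cap V' $ for a suitable cone/sphere configuration, and then that the set $\{u \in V : \|u\|\ge R\}$ — which contains the relevant part and is homeomorphic to $(\mathbb{R}^{\dim V}\setminus B_R)$, hence to $S^{\dim V -1}\times[R,\infty)$, which is contractible in itself when $\dim V \ge 2$ — carries the contraction. More carefully, following \cite{PS}: first flow $\Phi^\alpha$ down to stay inside a region where the $t^q$ term dominates, then use the map $(\sigma, u)\mapsto$ (normalize and push radially outward in $V$) to contract to a point. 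The continuity of this map and the fact that it keeps $\Phi \le \alpha$ throughout (because $\Phi(tu)$ is eventually decreasing in $t$ along rays in $V$, by the computation above) give that $\Phi^\alpha$ is contractible in itself.

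The main obstacle I expect is purely the bookkeeping in the second and third steps: ensuring the deformation stays inside $\Phi^\alpha$ requires knowing $t \mapsto \Phi(tu)$ is \emph{monotone} decreasing for $t$ past some threshold uniform over $u \in S\cap V$, not merely that it tends to $-\infty$; this monotonicity must be extracted from differentiating $\Phi(tu)$ in $t$ and using $q>\max(p,q_i)$ again, exactly as in \cite[Lemma 5.5]{PS}. Everything else is a routine transcription of the $p$-Laplacian arguments of \cite{PS}, with Lemma~\ref{C} and Lemma~\ref{Lemma 5.3} (in place of their classical counterparts) supplying the needed estimates and compactness in the fractional setting.
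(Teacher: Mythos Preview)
Your argument has a genuine gap, and it sits exactly where you try to pass to a finite-dimensional subspace $V$. The claim that $\{u \in V : \|u\| \ge R\} \cong S^{\dim V - 1} \times [R,\infty)$ is ``contractible in itself when $\dim V \ge 2$'' is simply false: that set deformation-retracts onto $S^{\dim V - 1}$, which is \emph{not} contractible for any finite $\dim V \ge 1$. Consequently the whole strategy of retracting $\Phi^\alpha$ onto something living in a finite-dimensional $V$ cannot work, and the step ``$\Phi^\alpha$ deformation-retracts onto $\Phi^\alpha \cap V'$\,'' is both unjustified and headed in the wrong direction.

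The argument in \cite[Lemmas 5.4--5.5]{PS} that the paper invokes runs the other way: it exploits precisely the \emph{infinite}-dimensionality of $W_0^{s,p}(\Omega)$. One shows that for $\alpha<0$ with $|\alpha|$ large enough and for every $u$ on the full unit sphere $S=\{\|u\|=1\}$, the set $\{t>0:\Phi(tu)\le\alpha\}$ is a half-line $[T_\alpha(u),\infty)$, with $T_\alpha$ continuous on $S$. The point you worried about --- that $|u|_q$ is not bounded below on $S$ --- is handled not by uniformity over $S$ but by working on the level set: one proves that whenever $\Phi(v)=\alpha$ then $\langle\Phi'(v),v\rangle<0$, using the identity
\[
\langle\Phi'(v),v\rangle - p\,\Phi(v) \;=\; -\Big(1-\tfrac{p}{q}\Big)\!\int_\Omega K|v|^q\,dx \;-\;\int_\Omega\big[g(x,v)v - pG(x,v)\big]\,dx,
\]
together with $\inf_\Omega K>0$ and Lemma~\ref{Lemma 5.2} (this is what that lemma is tailored for) to absorb the $g$-terms. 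Once $T_\alpha$ is in hand, the map $u\mapsto T_\alpha(u)\,u$ is a homotopy inverse to the radial projection $\Phi^\alpha\to S$, so $\Phi^\alpha\simeq S$; and the unit sphere of an infinite-dimensional Banach space is contractible. Your eventual-monotonicity observation is exactly the ingredient needed --- but it must be applied on the whole sphere, not on a finite-dimensional slice.
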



\noindent
We now state  the main existence result of this section.

\begin{theorem}
	\label{nontrivialth}
Assume that $\lambda \ge 0$, $q \in (p,p^\ast_s)$, $h \in \B_p^q$ with $|\{x \in \Omega : h(x) >0\}| >0$, $K \in \B_q$ satisfies \eqref{5.2}, and $g$ satisfies \eqref{ggrowth} with $K_i \in \B_{q_i}^q$ for $i = 1,\dots,n.$  Then problem \eqref{nontrivial} has a nontrivial weak solution in each of the following cases:
\begin{enumerate}
\item $\lambda \notin \left\{\lambda_k : k \ge 1 \right\}$;
\item $G(x,t) \ge 0$ for a.a. $x \in \Omega$ and all $t \in \mathbb{R}$;
\item $G(x,t) \le 0$ for a.a. $x \in \Omega$ and all $t \in \mathbb{R}$;
\item $sp > N$ and, for some $\delta > 0$, $G(x,t) \ge 0$ for a.a. $x \in \Omega$ and $|t| \le \delta$;
\item $sp > N$ and, for some $\delta > 0$, $G(x,t) \le 0$ for a.a. $x \in \Omega$ and $|t| \le \delta$;
\item $sp \le N$, $h \in \widetilde{\B}_p$, $K_i \in \widetilde{\B}_{q_i}$ and, for some $\delta > 0$, $G(x,t) \ge 0$ for a.a. $x \in \Omega$ and $|t| \le \delta$;
\item $sp \le N$, $h \in \widetilde{\B}_p$, $K_i \in \widetilde{\B}_{q_i}$ and, for some $\delta > 0$, $G(x,t) \le 0$ for a.a. $x \in \Omega$ and $|t| \le \delta$.
\end{enumerate}
\end{theorem}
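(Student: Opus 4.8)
The plan is to combine the variational machinery assembled above---the Palais--Smale condition from Lemma~\ref{Lemma 5.3}, the contractibility of a low sublevel set from Lemma~\ref{Lemma 5.5}, and the critical group computations at zero from Theorems~\ref{Theorem 4.1}, \ref{Theorem 4.2} and \ref{Theorem 4.6}---to run an argument by contradiction of Morse-theoretic type. Suppose that $0$ is the only critical point of $\Phi$. Then $0$ is automatically an isolated critical point, so the hypotheses of the relevant critical-group theorem are met in each case; note that the subcritical $p$-superlinear term $K(x)|u|^{q-2}u$ with $q>p$ together with \eqref{ggrowth} (each $q_i\in(p,q)$) means that near zero $\Phi$ behaves like the functional in Section~4 with $g$ replaced by $K(x)|u|^{q-2}u+g(x,u)$, which still satisfies a growth bound of type \eqref{ggrowth} with exponents strictly above $p$; hence the sign condition on $G$ translates to the corresponding sign condition needed in Theorems~\ref{Theorem 4.1}--\ref{Theorem 4.6} (the $K(x)|u|^q/q$ contribution to the primitive is of a definite sign only when combined appropriately, so I would absorb it by noting $\int_0^t(K(x)|\tau|^{q-2}\tau+g(x,\tau))\,d\tau$ retains the sign of $G$ when $G$ has a sign, and is $o(|t|^p)$-dominated when we only need local behavior).

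Next I would invoke the standard consequence of the Palais--Smale condition plus the deformation lemma: since $\Phi$ has $0$ as its unique critical point, for $\alpha<0$ with $|\alpha|$ large (as furnished by Lemma~\ref{Lemma 5.5}) the sublevel set $\Phi^\alpha$ is a deformation retract of $W^{s,p}_0(\Omega)\setminus\{0\}$, or more precisely $W^{s,p}_0(\Omega)$ deformation retracts onto $\Phi^\alpha$ and $\Phi^\alpha$ is homotopy equivalent to $\Phi^0\cap U\setminus\{0\}$ for a small neighborhood $U$ of $0$ via the flow. Using the long exact sequence of the pair $(W^{s,p}_0(\Omega),\Phi^\alpha)$ together with the excision/deformation identification $H^q(W^{s,p}_0(\Omega),\Phi^\alpha)\cong C^q(\Phi,0)$ (valid precisely because $0$ is the only critical value-carrying point), and the contractibility of both $W^{s,p}_0(\Omega)$ (a Banach space) and $\Phi^\alpha$ (Lemma~\ref{Lemma 5.5}), one deduces $C^q(\Phi,0)=0$ for all $q\geq 0$. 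This is the crux: the contractibility of $\Phi^\alpha$ kills all the relative cohomology, forcing every critical group at the unique critical point $0$ to vanish.

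I would then derive the contradiction case by case. In case (1), $\lambda\notin\{\lambda_k\}$, so either $0\le\lambda<\lambda_1$ or $\lambda_k<\lambda<\lambda_{k+1}$ for some $k$; in the former, Theorem~\ref{Theorem 4.1}(1a) gives $C^0(\Phi,0)\approx\mathbb{Z}_2\neq 0$, and in the latter Theorem~\ref{Theorem 4.1}(2a) gives $C^k(\Phi,0)\neq 0$---either way contradicting the vanishing just established. In cases (2) and (3), the global sign of $G$ lets me use Theorem~\ref{Theorem 4.1}: if $\lambda\notin\{\lambda_k\}$ we are back to case (1); if $\lambda=\lambda_1$ and $G\le 0$ then (1b) gives $C^0\neq0$; if $\lambda=\lambda_k$ with $G\ge 0$ then (2b) gives $C^k\neq 0$; if $\lambda=\lambda_{k+1}$ with $G\le 0$ then (2c) gives $C^k\neq 0$---the sign hypothesis in (2) or (3) is exactly what is needed to cover the remaining endpoint values of $\lambda$. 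Cases (4)--(5) are the same dichotomy but with $sp>N$ and only a local sign condition on $G$, handled identically using Theorem~\ref{Theorem 4.2}; cases (6)--(7) use Theorem~\ref{Theorem 4.6} under the strengthened weight hypotheses $h\in\widetilde{\B}_p$, $K_i\in\widetilde{\B}_{q_i}$, again matching the local sign conditions. In every case a nonzero critical group at $0$ contradicts $C^q(\Phi,0)=0$, so $\Phi$ must possess a critical point other than $0$, i.e.\ \eqref{nontrivial} has a nontrivial weak solution.

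The main obstacle I anticipate is the bookkeeping in the first two paragraphs: verifying that the extra term $K(x)|u|^{q-2}u$ does not disrupt the applicability of the Section~4 critical-group theorems (one must check that $\Phi$ here, near $0$, is of the form treated there, possibly after absorbing $\tfrac1q\int K|u|^q$ into the ``$g$'' perturbation since $q>p$ makes it $p$-superlinear and subcritical, so \eqref{ggrowth} still holds with $K$ adjoined to the $K_i$'s), and confirming that the global versus local sign conditions on $G$ in the seven cases align precisely with the hypotheses of the correct theorem among \ref{Theorem 4.1}, \ref{Theorem 4.2}, \ref{Theorem 4.6}. The deformation-theoretic identification $H^\ast(W^{s,p}_0(\Omega),\Phi^\alpha)\cong C^\ast(\Phi,0)$ under the uniqueness assumption is standard (see \cite[Chapter 3]{MR2640827}) once the Palais--Smale condition (Lemma~\ref{Lemma 5.3}) is in hand, so that part is routine.
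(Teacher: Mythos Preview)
Your proposal is correct and follows essentially the same route as the paper: argue by contradiction that $0$ is the only critical point, use the Palais--Smale condition (Lemma~\ref{Lemma 5.3}) together with the second deformation lemma to identify $C^q(\Phi,0)\approx H^q(W^{s,p}_0(\Omega),\Phi^\alpha)$, invoke the contractibility of $\Phi^\alpha$ (Lemma~\ref{Lemma 5.5}) to force all critical groups to vanish, and then contradict Theorems~\ref{Theorem 4.1}, \ref{Theorem 4.2}, or \ref{Theorem 4.6} according to the case. The paper's proof is simply the terse version of what you wrote; your additional remarks about absorbing $K(x)|u|^{q-2}u$ into the perturbation $g$ (so that the Section~4 theorems apply with the combined primitive $\tfrac{K(x)}{q}|t|^q+G(x,t)$) and the case-by-case matching of sign hypotheses are exactly the bookkeeping the paper leaves implicit.
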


\begin{proof}
Suppose that $0$ is the only critical point of $\Phi$. Taking $U = W_0^{s,p}(\Omega)$ in \eqref{criticalgroup}, we have
\[
C^q(\Phi,0) = H^q(\Phi^0,\Phi^0 \setminus \{0\}).
\]
Let $\alpha < 0$ be as in Lemma \ref{Lemma 5.5}. Since $\Phi$ has no other critical points and satisfies the
Palais-Smale condition by Lemma \ref{Lemma 5.3}, $\Phi^0$ is a deformation retract of $W_0^{s,p}(\Omega)$ and $\Phi^\alpha$ is a deformation retract of $\Phi^0 \setminus \{0\}$ by the second deformation lemma. So
\[
C^q(\Phi,0) \approx H^q(W_0^{s,p}(\Omega),\Phi^\alpha) = 0 \quad \forall q
\]
since $\Phi^\alpha$ is contractible in itself, contradicting Theorem \ref{Theorem 4.1}, Theorem \ref{Theorem 4.2}, or Theorem \ref{Theorem 4.6}.
\end{proof}


\section{Multiplicity}
\noindent
In this section we show the existence of infinitely many solutions via the Fountain Theorem. Since $W_0^{s,p}(\Omega)$ is separable,
there exist $\{e_n\}_{n\in {\mathbb N}}\subset W_0^{s,p}(\Omega)$  and $\{f_n\}_{n\in {\mathbb N}} \subset W_0^{-s,p'}(\Omega)$ with
$$
W_0^{s,p}(\Omega) = \overline{\mbox{span}\{e_n\}_{n=1}^{\infty}},\qquad
W_0^{-s,p'}(\Omega) = \overline{\mbox{span}\{f_n\}_{n=1}^{\infty}},\qquad
\langle f_i, e_j \rangle =
\begin{cases}
1 & \text{ if } i = j, \\
0 & \text{ if } i \ne j,
\end{cases}
$$
where $\langle \cdot,\cdot\rangle$ is the duality pairing between
$W_0^{-s,p'}(\Omega)$ and $W_0^{s,p}(\Omega)$ (see \cite[Section 17]{Zhao}). Let
$$
X_n = \mbox{span}\{e_n\}, \quad\,\,
Y_n = \bigoplus_{k=1}^n X_k, \quad\,\,
Z_n = \overline{\bigoplus_{k=n}^\infty X_k}.
$$
With $X_n, Y_n, Z_n$ taken as the above, we have \cite[Fountain Theorem]{Willem}

\begin{theorem}
	\label{Fountain_thm}
	Let $\Phi \in C^1(W_0^{s,p}(\Omega),\mathbb{R})$ be
	even and suppose that there exist $\rho_n >\gamma_n>0$ such that
	\begin{itemize}
		\item[$(\H_1)$] $a_n=\inf\limits_{u \in Z_n,\, \|u\|=\gamma_n}\Phi(u) \to +\infty$ as $n \to \infty$;
		\item[$(\H_2)$] $b_n=\max\limits_{u \in Y_n,\, \|u\|=\rho_n}\Phi(u) \leq 0$;
		\item[$(\H_3)$] $\Phi$ satisfies the Palais-Smale condition at the level $c$ for all $c>0$.
	\end{itemize}
	Then $\Phi$ has a sequence of critical values tending to $+\infty$.
\end{theorem}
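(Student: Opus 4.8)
The plan is to run a $\mathbb{Z}_2$-equivariant minimax scheme of Bartsch type. For each $n\geq 1$ put
\[
B_n:=\{u\in Y_n:\|u\|\leq\rho_n\},\qquad S_n:=\{u\in Y_n:\|u\|=\rho_n\},\qquad N_n:=\{u\in Z_n:\|u\|=\gamma_n\},
\]
and introduce the class
\[
\Gamma_n:=\bigl\{\gamma\in C(B_n,W_0^{s,p}(\Omega)):\ \gamma\ \text{odd},\ \gamma|_{S_n}=\mathrm{id}\bigr\},
\]
which is nonempty since $\mathrm{id}|_{B_n}\in\Gamma_n$. Define the minimax value $c_n:=\inf_{\gamma\in\Gamma_n}\max_{u\in B_n}\Phi(\gamma(u))$. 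Because $B_n$ is compact (it lies in the finite-dimensional space $Y_n$) and $\Phi$ is continuous, each $c_n$ is finite, with $c_n\leq\max_{u\in B_n}\Phi(u)$.

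The crucial point — and the step I expect to be the main obstacle — is the linking property: for every $n$ and every $\gamma\in\Gamma_n$ one has $\gamma(B_n)\cap N_n\neq\emptyset$. To prove it, set $U:=\{u\in B_n:\|\gamma(u)\|<\gamma_n\}$; since $\rho_n>\gamma_n$ and $\gamma=\mathrm{id}$ on $S_n$, the set $U$ is a bounded symmetric open neighbourhood of $0$ in $Y_n\cong\mathbb{R}^n$, and its boundary $\partial U$ in $Y_n$ consists of points with $\|\gamma(u)\|=\gamma_n$. Recalling that $u\in Z_n$ precisely when $\langle f_k,u\rangle=0$ for $k=1,\dots,n-1$, consider the odd continuous map $\psi:\partial U\to\mathbb{R}^{n-1}$, $\psi(u)=(\langle f_1,\gamma(u)\rangle,\dots,\langle f_{n-1},\gamma(u)\rangle)$. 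Since $\partial U$ is $\mathbb{Z}_2$-equivariantly an $(n-1)$-sphere (its genus equals $n$), a Borsuk–Ulam type argument (as in \cite{Willem}) forces $\psi(u_0)=0$ for some $u_0\in\partial U$; then $\gamma(u_0)\in Z_n$ and $\|\gamma(u_0)\|=\gamma_n$, i.e. $\gamma(u_0)\in N_n$. Consequently $\max_{u\in B_n}\Phi(\gamma(u))\geq\inf_{N_n}\Phi=a_n$ for every $\gamma\in\Gamma_n$, so $c_n\geq a_n$, and by $(\H_1)$ we get $c_n\to+\infty$; in particular $c_n>0$ for all large $n$.

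It remains to show that $c_n$ is a critical value of $\Phi$ for such $n$. If not, then since $\Phi$ is even and satisfies the Palais–Smale condition at level $c_n>0$ by $(\H_3)$, the equivariant quantitative deformation lemma (see \cite{Willem}) yields $\varepsilon\in(0,c_n/2)$ and an odd homeomorphism $\eta$ of $W_0^{s,p}(\Omega)$ with $\eta(\Phi^{c_n+\varepsilon})\subset\Phi^{c_n-\varepsilon}$ and $\eta=\mathrm{id}$ on $\Phi^{-1}(\mathbb{R}\setminus[c_n-2\varepsilon,c_n+2\varepsilon])$. Pick $\gamma\in\Gamma_n$ with $\max_{B_n}\Phi\circ\gamma\leq c_n+\varepsilon$. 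By $(\H_2)$, $\max_{S_n}\Phi=b_n\leq 0<c_n-2\varepsilon$, so $\eta$ fixes $S_n$; hence $\eta\circ\gamma$ is odd, equals $\mathrm{id}$ on $S_n$, and therefore lies in $\Gamma_n$, while $\max_{B_n}\Phi\circ(\eta\circ\gamma)\leq c_n-\varepsilon<c_n$, contradicting the definition of $c_n$. Thus each $c_n$ (for $n$ large) is a critical value of $\Phi$, and $c_n\to+\infty$ provides the desired sequence of critical values.

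Two comments on where the hypotheses enter: evenness of $\Phi$ is used both to make $\Gamma_n$ a meaningful class of odd maps and to produce the odd deformation $\eta$; the Palais–Smale assumption $(\H_3)$ is needed only through the deformation lemma and only at positive levels, which is exactly where the $c_n$ eventually lie thanks to $(\H_1)$; and the finite-dimensionality of $Y_n$ guarantees the minimax values are finite. The only genuinely topological ingredient is the Borsuk–Ulam intersection lemma of the second paragraph — everything else is the standard minimax template.
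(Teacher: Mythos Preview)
The paper does not prove this statement at all---it is quoted verbatim as \cite[Fountain Theorem]{Willem} and then applied to obtain Theorem~\ref{multip}. Your proposal is a faithful reconstruction of the Bartsch--Willem argument (the minimax class $\Gamma_n$, the intersection lemma via a Borsuk--Ulam/degree argument, and the equivariant deformation lemma), so you have supplied strictly more than the paper does, and along the same lines as the cited source.

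Two small technical remarks on your intersection step. First, the characterisation ``$u\in Z_n$ precisely when $\langle f_k,u\rangle=0$ for $k=1,\dots,n-1$'' is correct here but is not automatic from the biorthogonality relations alone; it uses that $P_{n-1}u:=\sum_{k<n}\langle f_k,u\rangle e_k$ is a continuous finite-rank projection and that $W_0^{s,p}(\Omega)=\overline{\mathrm{span}\{e_k\}}$, so that for any $u$ one approximates by finite sums $u_m$ and observes $u_m-P_{n-1}u_m\in Z_n$ converges to $u-P_{n-1}u$. Second, the assertion that $\partial U$ is ``$\mathbb{Z}_2$-equivariantly an $(n-1)$-sphere'' may fail if $U$ is not simply connected; the robust formulation (and the one in Willem) is via Borsuk's degree theorem: any odd continuous extension of your $\psi$ to $\overline{U}$, viewed as a map into $Y_n$ after adjoining the coordinate $\|\gamma(\cdot)\|-\gamma_n$, has odd degree, forcing a zero. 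With these cosmetic adjustments the argument is complete.
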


\noindent
Invoking this theorem, we obtain the existence of infinitely many solutions for problem \eqref{nontrivial}.
\begin{theorem}
	\label{multip}
	Assume that $q \in (p,p^\ast_s)$, $h \in \B_p^q$ with $|\{x \in \Omega : h(x) >0\}| >0,$ $K \in \B_q$ satisfies \eqref{5.2}, $g$ satisfies \eqref{ggrowth} with $K_i \in \B_{q_i}^q$ and $g(x,-u)=-g(x,u).$ Then problem \eqref{nontrivial} admits a sequence $\{u_n\}_{n\in {\mathbb N}}\subset W_0^{s,p}(\Omega)$ of solutions such that $\Phi(u_n)\to +\infty$ as $n\to\infty$.
\end{theorem}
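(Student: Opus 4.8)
The plan is to apply the Fountain Theorem (Theorem~\ref{Fountain_thm}) to the even functional $\Phi$ associated with \eqref{nontrivial}, which is of class $C^1$ on $W_0^{s,p}(\Omega)$ and even by the hypothesis $g(x,-u) = -g(x,u)$ (so $G$ is even in $t$) together with the evenness of all the other terms. Condition $(\H_3)$, namely the Palais--Smale condition at every positive level, is already furnished by Lemma~\ref{Lemma 5.3}, so the bulk of the work is verifying the geometric conditions $(\H_1)$ and $(\H_2)$ for suitable radii $\gamma_n, \rho_n$ with $\rho_n > \gamma_n > 0$.

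For $(\H_2)$, I would work on the finite-dimensional subspace $Y_n$, where all norms are equivalent. Since $\inf_\Omega K > 0$ by \eqref{5.2} and $q > p$, on $Y_n$ one has $\int_\Omega K(x)|u|^q\,dx \ge c_n \|u\|^q$ for some $c_n > 0$; the terms $\frac1p\|u\|^p$ and $\frac{\lambda}{p}\int_\Omega h|u|^p\,dx$ grow at most like $\|u\|^p$, and using Lemma~\ref{Lemma 5.2} (with $t = q_i < q$, so the exponent $m < p$) the contribution of $\int_\Omega G(x,u)\,dx$ is controlled by $C\|u\|^{m} + \varepsilon|u|_q^q$, which is absorbed. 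Hence $\Phi(u) \le \frac1p\|u\|^p + C\|u\|^p - \frac{c_n}{2q}\|u\|^q + C_n \to -\infty$ as $\|u\| \to \infty$ on $Y_n$, giving $b_n \le 0$ once $\rho_n$ is chosen large enough.

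For $(\H_1)$, the standard Fountain argument rests on the decay of the ``best embedding constant'' along the tail subspaces $Z_n$. The key quantity is $\beta_n := \sup_{u \in Z_n,\, \|u\| = 1} |u|_q$; since $W_0^{s,p}(\Omega)$ embeds compactly into $L^q(\Omega)$ for $q < p_s^\ast$, a routine argument (as in \cite{Willem}) shows $\beta_n \to 0$ as $n \to \infty$. For $u \in Z_n$ I estimate
\[
\Phi(u) \ge \frac1p\|u\|^p - C\|u\|^p\beta_n^p - \frac{1}{q}|K|\,\beta_n^q\|u\|^q - C(\varepsilon)\|u\|^{m} - \varepsilon\beta_n^q\|u\|^q,
\]
using Lemma~\ref{C} for the $h$-term and Lemma~\ref{Lemma 5.2} for the $G$-term; then choosing the radius $\gamma_n := (C\beta_n^q)^{-1/(q-p)}$ (so that $\gamma_n \to \infty$) makes the leading balance between $\frac1p\gamma_n^p$ and the $\|u\|^q$-terms yield $a_n \ge c\,\gamma_n^{p} - (\text{lower order}) \to +\infty$. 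One must check that the lower-order terms $\gamma_n^p \beta_n^p$ and $\gamma_n^{m}$ are genuinely of lower order than $\gamma_n^p$: the first tends to $0$ since $\beta_n \to 0$, and the second because $m < p$. Finally $\rho_n > \gamma_n$ can be arranged since $\gamma_n$ is fixed and $\rho_n$ from $(\H_2)$ may be taken arbitrarily large.

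The main obstacle is bookkeeping in the verification of $(\H_1)$: one needs the exponents to line up so that the chosen radius $\gamma_n$ both tends to infinity \emph{and} makes $a_n \to +\infty$, which forces a careful tracking of how the constants from Lemma~\ref{C} (the $h$-term, homogeneous of degree $p$), from \eqref{5.2} (the $K$-term, degree $q > p$), and from Lemma~\ref{Lemma 5.2} (the $g$-term, degree $m < p$ up to an $\varepsilon|u|_q^q$ remainder) interact with the decaying constant $\beta_n$. Once this is set up correctly the proof closes by invoking Theorem~\ref{Fountain_thm}, whose conclusion gives critical values of $\Phi$ tending to $+\infty$, hence a sequence of solutions $\{u_n\}$ of \eqref{nontrivial} with $\Phi(u_n) \to +\infty$.
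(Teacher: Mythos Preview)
Your overall strategy --- apply the Fountain Theorem, with $(\H_3)$ from Lemma~\ref{Lemma 5.3} and the geometric conditions checked via the decaying tail-constants $\beta_n$ --- matches the paper's. The treatment of $(\H_2)$ is fine. However, your $(\H_1)$ estimate has a genuine gap in the $K$-term.

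You write the bound $\frac{1}{q}|K|\,\beta_n^q\|u\|^q$ for $\int_\Omega K|u|^q\,dx$, which would require $\int_\Omega K|u|^q\,dx\le C|u|_q^q$. But $K$ is only assumed to lie in $\B_q$ with $\inf K>0$; nothing forces $K\in L^\infty$, so this inequality is unavailable. The correct bound, obtained exactly as in the proof of Lemma~\ref{C}, is
\[
\int_\Omega K|u|^q\,dx\le C\|u\|^{a}\,|u|_b^{\,q-a},\qquad \frac1r+\frac ap+\frac{q-a}{b}=1,
\]
with $a\in[0,q-1]$ possibly positive and $b\in(1,p_s^\ast)$ generally different from $q$. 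Consequently you must track a \emph{second} decaying constant $\beta_n'':=\sup\{|u|_b:u\in Z_n,\ \|u\|=1\}$ alongside $\beta_n':=\sup\{|u|_q:u\in Z_n,\ \|u\|=1\}$, set $\beta_n=\max\{\beta_n',\beta_n''\}$, and rebalance the choice of $\gamma_n$ accordingly. The paper takes $\gamma_n=\beta_n^{-1/2}$; the final check that $a_n\to+\infty$ then reduces to $a<\tfrac{p+q}{2}$, which holds since $a\le q-1$ and $p>1$.

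A second, smaller point: for the $h$-term in $(\H_1)$ the paper does not use Lemma~\ref{C} but rather Lemma~\ref{Lemma 5.2} (this is precisely where the hypothesis $h\in\B_p^q$, not merely $h\in\B_p$, enters), obtaining $\int_\Omega|h||u|^p\,dx\le\epsilon\|u\|^{m_0}+C_0(\epsilon)|u|_q^q$ with $m_0<p$. Your route via Lemma~\ref{C} would also work once the extra Lebesgue exponent is tracked, but the displayed factor $\beta_n^p$ should be $\beta_n$ (the bound is $C\|u\|^{p-1}|u|_b\le C\beta_n\|u\|^p$, linear in $\beta_n$).
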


\begin{proof}
Due to Lemma~\ref{Lemma 5.3}, we only need to verify conditions $(\H_1)$ and $(\H_2)$.
For $(\H_1)$, let
$$
\beta_n':=\sup \{|u|_q: u\in Z_n, \|u\|=1\},
\qquad\beta_n'':=\sup \{|u|_b: u\in Z_n, \|u\|=1\},
$$
where $b$ satisfies $1/r+a/p+(q-a)/b=1,$
where $a,r$ are as in $\B_q$. Set $\beta_n=\max\{\beta_n',\beta_n''\}$. Due to the compact injections $W_0^{s,p}(\Omega)\hookrightarrow\hookrightarrow L^b(\Omega)$ and $W_0^{s,p}(\Omega)\hookrightarrow\hookrightarrow L^q(\Omega)$ we have that $\beta_n\to 0$ as $n\to \infty$ in view of the abstract result \cite[Lemma 3.3]{Fan3}.  Then there exists
$n_0\in {\mathbb  N}$ such that $\beta_n\leq 1$ for all $n\geq n_0$.
For each $n\in \mathbb{N},$ define $\gamma_n$ by $\gamma_n :=1/\sqrt{\beta_n}$
and hence, $\gamma_n\geq 1$ for all $n\geq n_0$ and $\gamma_n \to \infty$ as $n\to \infty.$
For $u\in Z_n$ with $\|u\|=\gamma_n$, we have
\begin{equation}\label{Est.1}
\Phi( u) \geq \frac{1}{p}\|u\|^p-\frac{|\lambda|}{p}\int_{\Omega}|h||u|^pdx-\frac{1}{q}\int_{\Omega}K|u|^qdx-\sum_{i=1}^{n}\frac{1}{q_i}\int_{\Omega}|K_i||u|^{q_i}dx.
\end{equation}
By Lemma~\ref{Lemma 5.2} for any $\epsilon>0$ there exist $0\le m_i<p$
and $C_i(\epsilon)>0$,  $i=0,\cdots,n,$ such that
$$
\int_{\Omega}|h||u|^pdx\leq \epsilon \|u\|^{m_0}+C_0(\epsilon)|u|^q_q,
\qquad
\int_{\Omega}|K_i||u|^{q_i}dx\leq \epsilon \|u\|^{m_i}+C_i(\epsilon)|u|^q_q,
$$
and, see the proof of Lemma~\ref{C},
$$
\int_{\Omega}K|u|^qdx\leq C \|u\|^{a}|u|_b^{q-a}.
$$
Applying these estimates with $\epsilon<\frac{1}{2(n+1)p}$ we deduce from \eqref{Est.1}
that, for any $n\geq n_0,$
\begin{align*}
\Phi( u) &\geq \frac{1}{p}\|u\|^p-\frac{C}{q}\|u\|^{a}|u|_b^{q-a}-\epsilon\sum_{i=0}^{n}\|u\|^{m_i}-\Big(\sum_{i=0}^{n}C_i(\epsilon)\Big)|u|_q^q\\
&\geq\frac{1}{p}\|u\|^p-\frac{C}{q}\|u\|^{a}|u|_b^{q-a}-(n+1)\epsilon\|u\|^p-\Big(\sum_{i=0}^{n}C_i(\epsilon)\Big)|u|_q^q\\
&\geq\frac{1}{2p}\|u\|^p-\frac{C}{q}\|u\|^{a}|u|_b^{q-a}-\Big(\sum_{i=0}^{n}C_i(\epsilon)\Big)|u|_q^q\\
&\geq \frac{\beta_n^{-\frac{p}{2}}}{2p}-\frac{C}{q}\beta_n^{\frac{q}{2}-a}-\Big(\sum_{i=0}^{n}C_i(\epsilon)\Big)\beta_n^{\frac{q}{2}}\to\infty,
\end{align*}
as $n\to\infty$ since $0\leq a<\frac{p+q}{2}$ yielding $(\H_1)$.
For $(\H_2)$, again by Lemma~\ref{Lemma 5.2}, for $u\in Y_n$ we have
\begin{align*}
\Phi( u) &\leq \frac{1}{p}\|u\|^p+\frac{|\lambda|}{p}\int_{\Omega}|h||u|^pdx-\frac{1}{q}\int_{\Omega}K|u|^qdx+\sum_{i=1}^{n}\frac{1}{q_i}\int_{\Omega}|K_i||u|^{q_i}dx\\
&\leq\frac{1}{p}\|u\|^p+\frac{|\lambda|}{q}\left(\widehat{C}_0(\epsilon)\|u\|^{m_0}+\epsilon|u|_q^q\right)
+\sum_{i=1}^{n}\frac{1}{q_i}\left(\widehat{C}_i(\epsilon)\|u\|^{m_i}+\epsilon|u|_q^q\right) -\frac{1}{q}|u|_{L^q(K,\Omega)}^q.
\end{align*}
Since ${\rm dim}(Y_{n})<\infty$, the norms $\|\cdot\|$, $|\cdot|_q$ and $|\cdot|_{L^{q}(K,\Omega)} $ are equivalent. As
$p, m_i<q$, choosing $\epsilon$ small enough the last estimate yields
$\Phi(u)\leq 0$ for all $u\in Y_n$ with $\|u\|$ large enough. This completes $(\H_2)$. The proof is now complete.
\end{proof}



\bigskip

\end{document}